\newtheorem{theorem}{Theorem}
\newtheorem{corollary}[theorem]{Corollary}
\newtheorem{lemma}[theorem]{Lemma}
\newtheorem{claim}[theorem]{Claim}
\newcommand{\var}{\textbf{Var}}
\newcommand{\Var}{\textbf{Var}}	
\newcommand{\cov}{\textbf{Cov}}
\newcommand{\dt}{~dt}
\newcommand{\Z}{\mathbb{Z}}
\newcommand{\ffact}[2]{(#1)_{#2}}
\DeclareMathOperator*{\E}{\mathbb{E}}
\newcommand*{\defeq}{\stackrel{\text{def}}{=}}
\newcommand{\xx}{{\mathbf{A}, \mathbf{A'}}}
\title{A local central limit theorem for triangles in a random graph}
\author{Justin Gilmer\thanks{Department of Mathematics,
Rutgers University, Piscataway, NJ, USA. \texttt{jmgilmer@math.rutgers.edu}.
Research supported by NSF CCF-083727 and CCF-1218711.}
\and
Swastik Kopparty\thanks{Department of Mathematics \& Department of Computer Science, Rutgers University, Piscataway, NJ, USA. {\tt swastik.kopparty@rutgers.edu}. Research supported in part by a Sloan Fellowship and NSF CCF-1253886.} }
\begin{document}

\maketitle

\begin{abstract}
In this paper, we prove a local limit theorem for the distribution of the number of triangles in the Erdos-Renyi random graph $G(n,p)$, where $p \in (0,1)$ is a fixed constant. Our proof is based on bounding the characteristic function $\psi(t)$ of the number of triangles, and uses several different conditioning arguments for handling different ranges of $t$.
\end{abstract}

\section{Introduction}
We will work with the Erdos-Renyi random graph $G(n,p)$.
Recall that $G(n,p)$ is the random undirected graph
$G$ on $n$ vertices sampled by including
each of the ${n \choose 2}$ possible edges into $G$ independently
with probability $p$.
Let $S_n$ be the random variable equal to the number of triangles in $G(n,p)$.
Let $\mu_n = \E[S_n] = p^3 {n \choose 3}$ and $\sigma_n = \sqrt{\Var[S_n]} = \Theta(n^2)$ (see the Appendix for an exact calculation of $\sigma_n$).
Our main result (Theorem~\ref{thm:main}) states that if $p$ is a fixed constant in $(0,1)$, then the distribution of $S_n$ is {\em pointwise} approximated by
a discrete Gaussian distribution:
\begin{align}
\label{counteq}
\Pr[S_n = k ]  =   \frac{1}{\sqrt{2 \pi} \sigma_n} e^{-\left((k - \mu_n)/\sigma_n\right)^2/2} \pm o(1/n^2).
\end{align}
Thus, for every $k \in \mu_n \pm O(n^2)$, we determine the probability that $G(n,p)$ has exactly $k$ triangles, up to a $(1+o(1))$ multiplicative factor.

\subsection{Central Limit Theorems}
The study of random graphs has over 50 years of history, and understanding the distribution of subgraph counts has long been a central question in the theory. When the edge probability $p$ is a fixed constant in $(0,1)$, there is a classical central limit theorem for the triangle count $S_n$ (as well as for other connected subgraphs). This theorem says that for fixed constants $a,b$:
\[\left|\Pr\left[ a \leq (S_n-\mu_n)/\sigma_n \leq b\right] - \int\limits_a^b \frac{1}{\sqrt{2\pi}} e^{-t^2/2}\dt \right| = o(1),\]
(in other words, $(S_n -\mu_n) / \sigma_n$ converges in distribution to the standard Gaussian distribution).
There are several proofs of the central limit theorem for subgraph counts, as well as some vast generalizations, known today.

The original proofs of the central limit theorem for triangle counts (and general subgraph counts) used the method of moments.
This method is based on the fact for all distributions that are uniquely determined by their moments, the convergence of the moments of a sequence of random variables to the moments of the distribution implies convergence in distribution. Application of the moment method to subgraph statistics goes back to Erdos and Renyi's original paper~\cite{erdos1960evolution}. There were several papers in the 1980's (see \cite{karonski1983number} and \cite{karonski1984balanced}) that used the moment method to understand, under increasingly general assumptions, when normalized subgraph counts converge in distribution to the Gaussian distribution. This line of work culminated with a paper by Ruci\'nski~\cite{rucinski1988small} who completely characterized when normalized subgraph counts converge in distribution to the Gaussian distribution. 

There are several other approaches to the central limit theorem for triangle counts (and general subgraph counts).
Using Stein's method~\cite{stein1971dependent}, Barbour, Karo\'nski and Ruci\'nski~\cite{barbour1989central} obtained strong quantitative bounds on the error in the central limit theorem for subgraph counts. A technique from the asymptotic theory of statistics, known as $U$-statistics, was applied by Nowicki and Wierman~\cite{nowicki1988subgraph} to obtain a central limit theorem for subgraph counts, although, in a slightly less general setting than the theorem of Ruci\'nski. Janson \cite{janson1992orthogonal} used a similar method with several applications, including central limit theorems for the joint distribution of various graph statistics. 
None of these techniques, however, seem to be quantitatively strong enough to estimate the point probability mass of the triangle/subgraph counts when the edge probability $p$ is a constant.

\subsection{Poisson Convergence}
When the edge probability $p$ is small enough (for example, $p \approx c/n$ for triangles), then there are classical results
that give good estimates for $\Pr[S_n = k]$. In this regime, the distribution of the subgraph count $S_n$ itself (i.e., without normalization) converges in distribution (and hence pointwise) to a Poisson random variable. Some of the work dedicated to understanding this probability regime goes back to the original paper of Erdos and Renyi \cite{erdos1960evolution} who studied the distribution of counts of trees and cycles using the method of moments. Using  Chen's \cite{chen1975poisson} generalization of Stein's method to the Poisson setting,  Barbour \cite{barbour1982poisson} showed Poisson convergence for general subgraph counts. In the Poisson setting, the probability mass is concentrated in an interval of constant size and thus all results are ``local'' in the sense that they bound the point probability mass of these random variables.
   
 For slightly larger $p \in [n^{-1}, O(n^{-(1/2)})]$ (this is the range of $p$ where $\sigma_n = \Theta(\mu_n)$), R{\"o}llin and Ross~\cite{rollin2010local} showed that the probability mass function for triangle counts ($S_n$) is close in the $\ell_\infty$ and total variation metrics to the probability mass function of a translated Poisson distribution (and hence a discrete Gaussian distribution), and asked whether a similar local limit law holds for larger $p$ (See Remark 4.5 of that paper). Our result gives such a law for constant $p \in (0,1)$ for the $\ell_\infty$ metric. 
   
\subsection{Subgraph counts mod $q$}
Some more recent works studied the distribution of subgraph counts mod $q$.
For example, Loebl, Matousek and Pangrac~\cite{loebl2004triangles} studied the distribution of
$S_n$ mod $q$ in $G(n,1/2)$. They showed that when $q \in (\omega(1), O(\log^{1/3} n))$, then for 
every $a \in \Z_q$, the probability that $S_n \equiv a$ mod $q$ equals $(1 + o(1)) \cdot \frac{1}{q}$.
Kolaitis and Kopparty~\cite{kolaitis2013random} also studied this problem in $G(n,p)$ for fixed $p \in (0,1)$.
They showed that for every constant $q$, and every $a \in \Z_q$,
the probability that $S_n \equiv a$ mod $q$ equals $(1 + \exp(-n)) \cdot \frac{1}{q}$. This latter result
also generalizes to all connected subgraph counts, and to multidimensional versions for the joint distribution of
all connected subgraph counts simultaneously. DeMarco, Kahn and Redlich~\cite{demarco2014modular} extended these results of~\cite{kolaitis2013random}
to determine the distribution of subgraph counts mod $q$ in $G(n,p)$ for all $p$.
Many of these works use conditioning arguments that are similar to those used here.

\subsection{Our result}
The above lines of work:
\begin{enumerate}
\item the central limit theorem for triangle counts in $G(n, p)$ with $p$ constant, 
\item the Poisson local limit theorem for triangle counts in $G(n,p)$ with $p$ close to $n^{-1}$,
\item the uniform distribution of triangle counts mod $q$ in $G(n,p)$ with $p$ constant,
\end{enumerate}
all strongly suggest the truth of our main theorem (Theorem~\ref{thm:main}): there is a local discrete Gaussian limit law for
triangle counts in $G(n,p)$ with $p$ constant.

The high level structure of our proof follows the basic Fourier analytic strategy behind the classical local limit theorem for the sums of i.i.d. integer valued random variables. To show that the distribution of $(S_n - \mu_n)/\sigma_n$ is close pointwise to the discrete Gaussian distribution (as in equation~\eqref{counteq}), it suffices
to show that their characteristic functions (Fourier transforms) are close in $L_1$ distance. Specifically, if we define $\psi_n(t) = \E[e^{i t (S_n- \mu_n)/\sigma_n}]$, we need to show that:
$$ \int_{-\pi \sigma_n}^{\pi \sigma_n} |\psi_n(t) - e^{-t^2/2} | \dt = o(1).$$ The central limit theorem for triangle counts can be used to bound
the above integral in the range $(-A, A)$ for any large constant $A$. 
By choosing $A$ large enough, we can bound $\int_{A < |t| < \pi \sigma_n} |e^{-t^2/2} | \dt$ by an arbitrarily small constant. We are thus reduced to showing that
$\int_{A < |t| < \pi \sigma_n} |\psi_n(t)|\dt = o(1)$. We achieve this using two different arguments. For $A < |t| < n^{0.55}$, we show that
$|\psi_n(t)| < \frac{1}{t^{1+\delta}}$ using a conditioning argument, where we first reveal the edges in a set $F \subseteq { [n] \choose 2 }$, and count
triangles according to how many edges they have in $F$. For $n^{0.55} < |t| < \pi \sigma_n$, we show that $|\psi_n(t)|$ is superpolynomially small in $t$
by another conditioning argument, where we partition the vertex set $[n]$ into two sets $U$ and $V$, first expose all the edges within $V$,
and then consider the increase to the total number of triangles that occurs when we expose the remaining edges.

We conjecture that a similar local discrete Gaussian limit law should hold for the number of copies of any fixed connected graph $H$ in $G(n,p)$, 
for any $p$ that lies above the threshold probability for appearance of $H$. It would also be interesting to understand the joint distribution
of subgraph counts in $G(n,p)$ for several fixed connected graphs. It seems like there are many interesting questions here and much to be
investigated.



\section{Notation and Preliminaries}

Let $[n]$ denote the set $\{1,2,\ldots, n\}$. 
For each positive integer $n$ let $K_n$ be the complete graph on the vertex set $[n]$.
The Erdos-Renyi random graph $G(n,p)$ is the graph $G$ with vertex set $[n]$, where for each $e \in {[n] \choose 2}$, 
the edge $e$ is present in $G$ independently with probability $p$. For $e \in {[n] \choose 2}$, let
$X_e$ denote the indicator for the event that edge $e$ is present in $G$. For $E \subseteq {[n] \choose 2}$, we will let $\{0,1\}^E$ denote the set of $\{0,1\}$-vectors indexed by $E$. Likewise $X_E \in \{0,1\}^E$ will be the random vector for which the value on coordinate $e$ is the random variable $X_e$. 

For the rest of the paper $p \in (0,1)$ will be a universal fixed constant. All asymptotic notation will hide constants which may depend on $p$. We will use $S_n$ to denote the number of triangles in $G(n,p)$ (thus $S_n \in [0, {n \choose 3}]$). The mean of $S_n$ is $p^3\binom{n}{3}$ and the variance (see Appendix) is $\sigma_n^2 = \Theta(n^4)$. We let $R_n$ denote the normalized triangle count, $R_n \defeq \frac{S_n - p^3\binom{n}{3}}{\sigma_n}$. 

{\bf Fourier inversion formula for lattices:} Let $Y$ be a random variable that has support contained in the (shifted) discrete lattice $\mathcal{L} \defeq \frac{1}{b} (\mathbb{Z} - a)$ for real numbers $a, b$. Let $\psi(t) \defeq \E[e^{itY}]$ be the characteristic function of $Y$. Then for all $y \in \mathcal{L}$ it holds that
 \begin{equation}\label{eq:inversion}
 \Pr(Y = y) = \frac{1}{2\pi b} \int\limits_{-\pi b }^{\pi b} e^{-ity}\psi(t)\dt.
 \end{equation} 

Throughout the paper, for real numbers $x$ we will use $\|x\|$ to denote the distance from $x$ to the nearest integer. 
We will often apply the following easy bound. 


\begin{lemma}
\label{lem:coslem}
Let $B$ be a Bernoulli random variable that is 1 with probability $p$.
Then:
$$| \E_B [ e^{i \theta B} ] |  \leq 1 - 8p(1-p) \cdot \left\| \theta/2\pi \right\|^2 .$$
\end{lemma}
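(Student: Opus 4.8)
The plan is a direct computation of the modulus, followed by two elementary inequalities. First I would write $\E_B[e^{i\theta B}] = (1-p) + p e^{i\theta}$ and compute its squared modulus explicitly:
\[
\bigl|\E_B[e^{i\theta B}]\bigr|^2 = \bigl((1-p) + p\cos\theta\bigr)^2 + (p\sin\theta)^2 = (1-p)^2 + 2p(1-p)\cos\theta + p^2.
\]
Collecting terms and using $1 - 2p + 2p^2 = 1 - 2p(1-p)$ together with the half-angle identity $1 - \cos\theta = 2\sin^2(\theta/2)$, this simplifies to
\[
\bigl|\E_B[e^{i\theta B}]\bigr|^2 = 1 - 2p(1-p)(1 - \cos\theta) = 1 - 4p(1-p)\sin^2(\theta/2).
\]

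Next I would pass from the square to the modulus via the bound $\sqrt{1-x} \le 1 - x/2$, valid for $x \in [0,1]$ (and $4p(1-p)\sin^2(\theta/2) \in [0,1]$ since $p(1-p) \le 1/4$). This gives
\[
\bigl|\E_B[e^{i\theta B}]\bigr| \le 1 - 2p(1-p)\sin^2(\theta/2).
\]
It remains to show $\sin^2(\theta/2) \ge 4\,\|\theta/2\pi\|^2$. Writing $\theta/2\pi = k + r$ with $k \in \mathbb{Z}$ and $|r| = \|\theta/2\pi\| \le 1/2$, periodicity gives $\sin^2(\theta/2) = \sin^2(\pi k + \pi r) = \sin^2(\pi r)$, so it suffices to prove $|\sin(\pi r)| \ge 2|r|$ for $|r| \le 1/2$. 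This is exactly Jordan's inequality $\sin x \ge (2/\pi)x$ on $[0,\pi/2]$ applied at $x = \pi|r|$. Combining the two displays yields $|\E_B[e^{i\theta B}]| \le 1 - 2p(1-p)\cdot 4\|\theta/2\pi\|^2 = 1 - 8p(1-p)\|\theta/2\pi\|^2$, as claimed.

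There is no real obstacle here; the only point requiring a moment's care is the reduction of $\sin^2(\theta/2)$ to a function of the distance-to-nearest-integer $\|\theta/2\pi\|$, which is handled cleanly by the $\pi$-periodicity of $\sin^2$ and then Jordan's inequality on the reduced argument. Everything else is algebra and the standard concavity bound for the square root.
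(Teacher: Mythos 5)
Your proof is correct and follows essentially the same route as the paper: compute the squared modulus as $1-2p(1-p)(1-\cos\theta)$, apply $\sqrt{1-x}\le 1-x/2$, and bound the trigonometric factor by $\|\theta/2\pi\|^2$. The only difference is cosmetic — the paper simply asserts the elementary inequality $\cos t \le 1-8\|t/2\pi\|^2$ for $t\in[-\pi,\pi]$, whereas you derive it via the half-angle identity and Jordan's inequality.
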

\begin{proof}

  Without loss of generality, we may assume that $\theta \in [-\pi, \pi]$.
  We first state two elementary inequalities:
    \begin{equation} \label{eq:cos}
    \cos(t) \leq 1 - 8 \cdot \| t/2\pi \|^2 \qquad \text{(for $t \in [-\pi, \pi]$)}
    \end{equation}
    and 
    \begin{equation} \label{eq:sqrt}
    \sqrt{1-t} \leq 1 - t/2 \qquad \text{(for $t \leq 1$).}
    \end{equation}
    Then we have the following,
\begin{align*}
 |E[e^{i \theta b}] |  & = |p + (1-p)e^{i \theta} | \\
  & = \sqrt{  p^2 + (1-p)^2 + 2p(1-p) \cos (\theta) } \\
 & \leq \sqrt{ p^2 + (1-p)^2 + 2p (1-p) \left( 1 - 8 \cdot \| \theta/2\pi \|^2 \right) } \qquad \text{(applying \eqref{eq:cos})}  \\
 & = \sqrt {   1  -  16 p (1-p) \| \theta/2\pi \|^2  }  \\
 & \leq  1  - 8 p(1-p) \| \theta/2\pi \|^2 \qquad \text{(applying \eqref{eq:sqrt})}.
\end{align*}

\end{proof}

\section{Main Result}



We now give a formal statement of our main result.

   
   
   \begin{theorem}[Local limit law for triangles in $G(n,p)$] \label{thm:main}
    Let
  \[p_n(x) = \Pr(R_n = x)  \text{\ \  for } x \in \mathbb{L}_n = \left\lbrace\frac{k - p^3\binom{n}{3}}{\sigma_n} : k \in \mathbb{Z}\right\rbrace\]
  and 
  \[\mathcal{N}(x) = \frac{1}{\sqrt{2\pi}} e^{-x^2/2} \text{\ \ for } x \in (-\infty,\infty).\]
  Then as $n \rightarrow \infty$,
  \[\sup\limits_{x \in \mathbb{L}_n} \left| \sigma_n p_n(x) - \mathcal{N}(x)\right| \rightarrow 0.\]
  \end{theorem}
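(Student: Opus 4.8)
The plan is to follow the Fourier-analytic strategy sketched in the introduction. By the Fourier inversion formula for lattices \eqref{eq:inversion} applied to $R_n$ (which is supported on $\mathbb{L}_n = \frac{1}{\sigma_n}(\mathbb{Z} - p^3\binom{n}{3})$, so $b = \sigma_n$), we have $p_n(x) = \frac{1}{2\pi\sigma_n}\int_{-\pi\sigma_n}^{\pi\sigma_n} e^{-itx}\psi_n(t)\dt$, where $\psi_n(t) = \E[e^{itR_n}]$. Since the Gaussian density $\mathcal{N}(x)$ has $\int_{\mathbb{R}} e^{-itx}e^{-t^2/2}\dt = \sqrt{2\pi}\,e^{-x^2/2}$, subtracting and bounding $|e^{-itx}| \le 1$ uniformly in $x$ reduces the theorem to showing
\[
\int_{-\pi\sigma_n}^{\pi\sigma_n} |\psi_n(t) - e^{-t^2/2}|\dt \;+\; \int_{|t| > \pi\sigma_n} e^{-t^2/2}\dt \;=\; o(1).
\]
The second integral is trivially $o(1)$ since $\sigma_n \to \infty$. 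So everything comes down to the first integral.

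The strategy is to split the range of $t$ into three pieces. For $|t| \le A$ (a large constant), the classical central limit theorem for triangle counts gives $\psi_n(t) \to e^{-t^2/2}$ pointwise, and since $|\psi_n(t) - e^{-t^2/2}| \le 2$ is bounded, dominated convergence gives $\int_{-A}^A |\psi_n(t) - e^{-t^2/2}|\dt \to 0$ for each fixed $A$. For the tail contribution of the Gaussian, $\int_{A < |t|} e^{-t^2/2}\dt$ can be made smaller than any prescribed $\varepsilon$ by taking $A$ large. It then remains to control $\int_{A < |t| < \pi\sigma_n} |\psi_n(t)|\dt$, and this is the heart of the paper: we need two separate conditioning arguments, one for the ``moderate'' range $A < |t| < n^{0.55}$ and one for the ``high'' range $n^{0.55} < |t| < \pi\sigma_n$.

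For the moderate range, the plan is a conditioning argument: fix a set $F \subseteq \binom{[n]}{2}$ of edges, expose $X_F$ first, and write $S_n = S_n^{(0)} + S_n^{(1)} + S_n^{(2)} + S_n^{(3)}$ according to how many of a triangle's three edges lie in $F$. Conditioned on $X_F$, the term $S_n^{(0)}$ (triangles entirely outside $F$) is a function only of edges in $\binom{[n]}{2}\setminus F$; the key is that $S_n^{(1)}$ (triangles with exactly one edge in $F$) is, conditionally, a sum over the edges $e \notin F$ of roughly independent Bernoulli-type contributions, so Lemma~\ref{lem:coslem} can be iterated to show $|\E[e^{itS_n/\sigma_n} \mid X_F]|$ decays. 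Choosing $F$ (e.g.\ a random or structured sparse-ish set) so that this conditional characteristic function is at most $1/t^{1+\delta}$ with high probability over $X_F$, and absorbing the bad event, yields $|\psi_n(t)| < t^{-(1+\delta)}$, whose integral over $|t| > A$ is $O(A^{-\delta}) = o(1)$ as $A \to \infty$. For the high range, the plan is a different partition: split $[n] = U \sqcup V$, expose all edges inside $V$ first, and study the increment to the triangle count contributed by edges touching $U$. Here even a single vertex $u \in U$, once its edges to $V$ are exposed partially, creates many new triangles, and the conditional distribution of $S_n$ modulo small integers becomes close to uniform on a scale fine enough (compared to $\sigma_n$) that $|\psi_n(t)|$ is superpolynomially small throughout $n^{0.55} < |t| < \pi\sigma_n$; summing/integrating a superpolynomially small bound over a range of length $O(\sigma_n) = O(n^2)$ still gives $o(1)$.

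The main obstacle is the moderate-range bound $|\psi_n(t)| < t^{-(1+\delta)}$ for $A < |t| < n^{0.55}$: the triangle count is a degree-3 polynomial in the edge indicators, so the contributions of different edges are far from independent, and one must choose the conditioning set $F$ and organize the sum $S_n^{(1)}$ carefully enough that, after conditioning, genuine near-independence (or a martingale/Azuma-type structure) emerges to which Lemma~\ref{lem:coslem} applies with a quantitatively strong enough bound. Getting the exponent to beat $1$ (so that $\int_{|t|>A} |\psi_n(t)|\dt$ is not just finite but $o(1)$ as $A$ grows), uniformly for $t$ all the way up to $n^{0.55}$, is where the real work lies; the high-range argument, by contrast, only needs a crude superpolynomial bound and should be more robust.
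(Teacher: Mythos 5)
Your proposal is correct and follows essentially the same route as the paper: Fourier inversion on the lattice, the three-region split at $|t|=A$ and $|t|=n^{0.55}$, dominated convergence plus the CLT for the constant range, a bound of the form $D/|t|^{1+\delta}$ for the intermediate range via conditioning on a revealed edge set, and a superpolynomial bound for large $|t|$ via a vertex bipartition. The sketches of the two conditioning arguments also match the paper's Lemmas~\ref{lem:smallt} and~\ref{lem:bigt}, so no further comparison is needed.
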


Equivalently, we have that for all $n$, for all $k \in \mathbb Z$,
$$ \Pr[S_n = k ] =  \frac{1}{\sigma_n} \cdot \mathcal{N}\left(\frac{k - p^3 \cdot {n \choose 3}}{\sigma_n} \right) + o\left(\frac{1}{n^2}\right),$$
(where the $o(1)$ term goes to $0$ as $n \to \infty$, uniformly in $k$).

   \begin{proof}
   Let $\psi_n(t) = \mathbb{E}[e^{i t R_n}]$. Then the Fourier inversion formula for lattices (equation \ref{eq:inversion}) gives us
   \[ \sigma_n p_n(x) = \frac{1}{2\pi} \int\limits_{-\pi \sigma_n}^{\pi \sigma_n} e^{-itx}\psi_n(t)\dt.\]
   The standard Fourier inversion formula (for $\mathbb R$), along with the well known formula for the Fourier transform of $\mathcal{N}$, gives us:
   \[\mathcal{N}(x) = \frac{1}{2\pi} \int\limits_{-\infty}^{\infty} e^{-itx}e^{- t^2 / 2} \dt.\]
   Therefore,
   \begin{equation}\label{eq:eq0} \left| \sigma_n p_n(x) - \mathcal{N}(x) \right| \leq \int\limits_{-\pi \sigma_n}^{\pi \sigma_n} |\psi_n(t) - e^{-t^2/2}|\dt + 2\int\limits_{\pi \sigma_n}^{\infty} e^{-t^2/2}\dt
   \end{equation}
   The second term goes to zero as $n$ tends to infinity. Thus, it suffices to show that
   \begin{equation}
\label{tobound}
      \int\limits_{-\pi \sigma_n}^{\pi \sigma_n} \left|\psi_n(t) - e^{- t^2/2}\right|\dt 
   \end{equation}
tends to $0$.

   Let $A>0$ be a large constant to be determined later. We divide the integral into three regions:
   \begin{itemize}
   \item $R_1 = (-A,A)$
   \item $R_2 = (-n^{0.55}, -A) \cup (A, n^{0.55})$
   \item $R_3 = (-\pi \sigma_n, -n^{0.55}) \cup (n^{0.55}, \pi \sigma_n)$
   \end{itemize}
The following three lemmas will help us bound the integral of $ \left|\psi_n(t) - e^{- t^2/2}\right|$ in these three regions.
\begin{lemma} \label{lem:tconst}
Let $A$ be a fixed positive real number. Then
\[ \int\limits_{-A}^{A} \left| \psi_n(t) - e^{- t^2/2}\right|\dt \rightarrow 0\]
as $n \rightarrow \infty$.
\end{lemma}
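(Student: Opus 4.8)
The plan is to deduce this from the classical central limit theorem for triangle counts together with a uniform integrability / equicontinuity argument on the compact interval $[-A,A]$. First I would recall that $R_n = (S_n - \mu_n)/\sigma_n \Rightarrow \mathcal{N}(0,1)$ in distribution (this is Ruci\'nski's theorem, or any of the earlier references cited in the introduction). Convergence in distribution immediately gives pointwise convergence of characteristic functions: for each fixed $t$, $\psi_n(t) = \E[e^{itR_n}] \to e^{-t^2/2}$ as $n \to \infty$. So the integrand $|\psi_n(t) - e^{-t^2/2}|$ tends to $0$ pointwise on $[-A,A]$, and it is bounded above by $2$ everywhere; by the bounded convergence theorem the integral over the fixed finite interval $(-A,A)$ tends to $0$.

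The only subtlety is making sure we are allowed to invoke bounded convergence — i.e. that $\psi_n$ is measurable (trivially true, it is continuous in $t$) and that the dominating constant function $2$ is integrable on $(-A,A)$ (true since the interval is bounded). So in fact there is essentially no obstacle here beyond quoting the CLT; the whole content of the lemma is that on a \emph{fixed compact} $t$-range the Fourier-analytic reduction reproduces exactly the classical CLT, and no new work about $G(n,p)$ is needed. If one wanted to avoid citing the full strength of Ruci\'nski's theorem, one could instead prove $\psi_n(t)\to e^{-t^2/2}$ directly by the method of moments: show $\E[R_n^k] \to \E[Z^k]$ for the standard Gaussian $Z$ (computing the leading-order contribution to the $k$-th moment of $S_n$ from configurations of $k/2$ vertex-disjoint ``cherry'' structures), which determines the limiting distribution since the Gaussian is determined by its moments, and hence gives convergence of characteristic functions.

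I expect the main obstacle to be purely expository: deciding how much of the classical CLT to take as a black box versus reprove. Since the introduction already surveys several proofs of the CLT for triangle counts, the cleanest route is to cite it and apply bounded convergence in one or two lines. The genuinely new difficulties in the paper all live in Lemmas for $R_2$ and $R_3$ (bounding $|\psi_n(t)|$ for large $t$ via the conditioning arguments), not here.
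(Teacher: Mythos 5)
Your proposal is correct and follows essentially the same route as the paper: the paper also derives $R_n \xrightarrow[]{d} N(0,1)$ via the method of moments (with the moment computation relegated to the Appendix), invokes the continuity theorem to get $\psi_n(t) \to e^{-t^2/2}$ pointwise, and concludes by dominated convergence with the constant bound $2$ on the fixed interval $[-A,A]$. No gaps.
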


\begin{lemma} \label{lem:smallt}
  There exists a sufficiently large constant $D = D(p)$ and $\delta > 0$ such that, for all $t$ with $|t| \in (0,n^{0.55}]$,
  \[|\psi_n(t)| \leq D/|t|^{1+\delta}.\]
\end{lemma}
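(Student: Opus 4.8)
The plan is to reduce the lemma to an estimate on $\bigl|\,\E[e^{i\theta S_n}]\bigr|$, where $\theta = t/\sigma_n$, since $|\psi_n(t)| = \bigl|\E[e^{itR_n}]\bigr| = \bigl|\E[e^{i\theta S_n}]\bigr|$ and $\sigma_n = \Theta(n^2)$. I would then bound this by conditioning: fix a set $F\subseteq\binom{[n]}2$, reveal the edges of $G$ outside $F$, and decompose $S_n$ according to how many of a triangle's three edges lie in $F$. For the leftover randomness (the edges inside $F$) to enter $S_n$ in a form amenable to Lemma~\ref{lem:coslem}, the cleanest instance is $F$ a matching: then no triangle uses two edges of $F$, so conditioned on the revealed edges one has the \emph{exact} identity $S_n = S' + \sum_{f\in F}X_f T_f$, where $S'$ (the number of triangles avoiding $F$) and each $T_f$ (the number of common neighbours of the endpoints of $f$) are determined by the revealed edges, and the $X_f$, $f\in F$, are independent Bernoulli$(p)$.

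\textbf{The matching argument.} First I would run this in full. Taking expectation over the $X_f$'s, $\E[e^{i\theta S_n}\mid\text{revealed}] = e^{i\theta S'}\prod_{f\in F}(1-p+pe^{i\theta T_f})$, so Lemma~\ref{lem:coslem} gives
\[
|\psi_n(t)|\ \le\ \E\Bigl[\exp\Bigl(-8p(1-p)\sum_{f\in F}\bigl\|\theta T_f/2\pi\bigr\|^2\Bigr)\Bigr].
\]
Choosing $F$ to be a near-perfect matching, standard concentration for co-degrees gives $T_f = \Theta(n)$ for every $f$ with overwhelming probability; and since $\theta T_f = \Theta(t/n) = o(1)$ for $|t|\le n^{0.55}$, we have $\|\theta T_f/2\pi\| = \theta T_f/2\pi$, so $\sum_f\|\theta T_f/2\pi\|^2 = \Theta(t^2/n)$. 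This yields $|\psi_n(t)|\le\exp(-\Omega(t^2/n))$, which is far below $1/|t|^{1+\delta}$ as soon as $|t|\ge n^{1/2+\epsilon}$; so the matching argument disposes of the upper end of the range $(0,n^{0.55}]$.

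\textbf{Moderate $t$.} The harder regime is moderate $|t|$ (from the constant in the lemma up to about $\sqrt n$), where $\exp(-\Omega(t^2/n))$ degrades to a constant. Here I would instead take $F$ to be a \emph{constant fraction} of all $\binom n2$ edges (say each edge placed in $F$ independently with probability $1/2$) and reveal $X_F$. Now the part of $S_n$ that is linear in the unrevealed edges $X_{F^c}$ is $S^{(2)}$, the number of triangles with exactly two edges in $F$: it equals $\sum_{e\in F^c}X_e c_e$ with $\Theta(n^2)$ terms, each coefficient $c_e$ concentrated at $\Theta(n)$, so applying Lemma~\ref{lem:coslem} to $S^{(2)}$ produces a factor $\exp(-\Omega(\theta^2 n^4)) = \exp(-\Omega(t^2))$ — genuine Gaussian-type decay, far more than needed. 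The price is that triangles with one or zero edges in $F$ contribute a remainder $S^{(1)}+S^{(0)}$ that is quadratic, resp.\ cubic, in $X_{F^c}$; an ANOVA/Hoeffding-type computation shows that, after centering, this remainder has variance only $\Theta(n^3)$ (the leading, degree-one, behaviour of $S_n$ is an affine function of the edge count $|E(G)|$, whose characteristic function already has Gaussian decay), so that $e^{i\theta(S^{(1)}+S^{(0)})}$ is a $1+O(t/\sqrt n)$ perturbation for $|t|\lesssim\sqrt n$.

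\textbf{Main obstacle.} The crux is making this last step rigorous and uniform. Because $S^{(1)}+S^{(0)}$ is built from the same edges as the linear part $S^{(2)}$, I cannot simply multiply characteristic functions; I must bound the cross term $\E\bigl[e^{i\theta S^{(2)}}(e^{i\theta(S^{(1)}+S^{(0)})}-1)\bigr]$ by $O(1/|t|^{1+\delta})$ for \emph{every} moderate $t$, not merely for $t$ of constant size — and the naive bound $\theta\cdot\E|\text{remainder}|$ is only $\Theta(t/\sqrt n)$, which is too weak once $|t|\gtrsim n^{1/4}$. I expect the fix to require either revealing an extra sub-layer of $X_{F^c}$ that tames the degree-$\ge 2$ remainder while keeping enough of the $\Theta(n^2)$ linear directions alive, or a conditional small-ball estimate showing that, given the degree-$\ge 2$ part, the linear (edge-count-like) part remains spread out so that $\E[e^{i\theta S^{(2)}}\mid \text{remainder}]$ is still $\exp(-\Omega(t^2))$. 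Finally one must check that the two conditioning schemes — the dense-$F$ one for moderate $t$ and the matching one for $|t|\ge n^{1/2+\epsilon}$ — overlap and together cover all of $(0,n^{0.55}]$ with constants $D,\delta$ depending only on $p$. Reconciling the bounds across $t\sim\sqrt n$ is where the real work lies.
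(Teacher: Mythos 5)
Your matching argument for $|t|\ge n^{1/2+\epsilon}$ is correct and coincides with the paper's warm-up computation; the genuine gap is in the moderate-$t$ regime, and it is the one you yourself flag. Two concrete problems. First, the variance claim is wrong as stated: conditioned on $X_F$, the remainder $S^{(1)}$ alone has variance $\Theta(n^4)$, not $\Theta(n^3)$, since writing $S^{(1)}=\sum x_g X_eX_f$ over triangles with $g\in F$ and $e,f\in F^c$, there are $\Theta(n^4)$ pairs of such triangles sharing an unrevealed edge, each contributing covariance $\Theta(1)$. The figure $\Theta(n^3)$ is the variance of the ANOVA components of degree $\ge 2$, a different object; to invoke it you would first have to strip the linear-in-$X_{F^c}$ parts out of $S^{(1)}+S^{(0)}$ and fold them into $S^{(2)}$, which is precisely the "conditional small-ball" step you defer. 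Second, even granting that step, your own estimate $\theta\cdot\E|\text{remainder}|=\Theta(t/\sqrt n)$ beats $1/t^{1+\delta}$ only for $|t|\lesssim n^{1/4}$, so the range $n^{1/4}\lesssim |t|\lesssim n^{1/2}$ is not covered by either of your two schemes. The proposal is therefore an honest plan with a hole exactly where the lemma is hardest.

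The paper closes this hole not by densifying $F$ but by \emph{sparsifying} the unrevealed set and tuning its density to $t$: the edges revealed last form $E=M_1\cup\dots\cup M_k$, a union of $k$ disjoint perfect matchings between two halves of $[n]$, with $k=k(t)$. The part of $S_n$ linear in $X_E$ is a sum over $nk/2$ independent edges with coefficients concentrated at $\Theta(n)$, so Lemma~\ref{lem:coslem} gives a factor $e^{-\Theta(t^2k/n)}$, which is at most $t^{-(1+\delta)}$ once $k\gtrsim n\log^2(t)/t^2$. The quadratic remainder $Z$ counts triangles with two edges in $E$; since $E$ is only $k$-regular there are just $n\binom{k}{2}$ such triangles, whence $\Var[Z]=O(nk^3)$, and Chebyshev together with $|e^{i\alpha}-1|\le|\alpha|$ bounds its contribution by $O\bigl(t^{3/2+\delta/2}(k/n)^{3/2}\bigr)+O\bigl(t^{-(1+\delta)}\bigr)$, which is acceptable once $k\lesssim n/t^{5/3+\delta}$. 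The window $n\log^2(t)/t^2\ll k\ll n/t^{5/3+\delta}$ is nonempty exactly because $|t|\le n^{0.55}<n^{3/5}$. The missing idea, in short, is to trade off the decay of the linear term against the variance of the quadratic remainder via the single parameter $k$, rather than trying to kill the remainder of a dense decomposition outright.
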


\begin{lemma} \label{lem:bigt} 
There exists a sufficiently large constant $D = D(p)$ such that, for all $t$ with  $|t| \in [n^{0.55},\pi \sigma_n]$, it holds that 
  \[|\psi_n(t)| \leq D/|t|^{50}.\]
\end{lemma}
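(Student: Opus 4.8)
The plan is to reduce $|\psi_n(t)|$, for $t$ in this range, to an expectation of the form $\E[\exp(-c\,\Sigma)]$ where $\Sigma$ is a sum of $\Theta(n)$ terms of the shape $\|\theta D_j/2\pi\|^2$, with $\theta:=t/\sigma_n$ and the $D_j$ explicit random coefficients, and then to show $\Sigma$ is large with overwhelming probability. Concretely, partition $[n]=U\sqcup V$ with $|U|=n/4$, $|V|=3n/4$, fix a matching $M=\{\{u_1,v_1\},\dots,\{u_{n/4},v_{n/4}\}\}$ between $U$ and $V$, and reveal every edge of $K_n$ except the matching edges (i.e.\ reveal $X_{\binom V2}$, $X_{\binom U2}$, and all $U$--$V$ edges outside $M$). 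A triangle uses at most one matching edge, and whether the triangle $\{u_j,v_j,w\}$ is present, given $X_{u_jv_j}=1$, depends only on revealed edges; hence $S_n=C+\sum_j D_jX_{u_jv_j}$, where $C$ and the $D_j$ are measurable w.r.t.\ the revealed edges, $D_j$ is exactly the number of common neighbours of $u_j$ and $v_j$, and the $X_{u_jv_j}$ are i.i.d.\ Bernoulli$(p)$, independent of everything revealed. Applying Lemma~\ref{lem:coslem} coordinatewise,
\[
|\psi_n(t)|=\Bigl|\E\Bigl[e^{i\theta C}\textstyle\prod_j\E\bigl[e^{i\theta D_jX_{u_jv_j}}\mid\text{revealed}\bigr]\Bigr]\Bigr|
\le\E\bigl[\exp(-8p(1-p)\Sigma)\bigr],\qquad \Sigma:=\sum_j\bigl\|\tfrac{\theta D_j}{2\pi}\bigr\|^2 .
\]

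To finish it suffices to prove that $\Sigma\ge c\log|t|$ (note $|t|\le\pi\sigma_n=O(n^2)$, so $\log|t|=O(\log n)$) except on an event of probability $O(|t|^{-50})$, and then integrate the tail of $e^{-8p(1-p)\Sigma}$. Each $D_j=\sum_{w\ne u_j,v_j}X_{u_jw}X_{v_jw}$ is a sum of $n-2$ i.i.d.\ Bernoulli$(p^2)$'s, so $D_j\in[p^2n/2,\,2p^2n]$ for all $j$ simultaneously with probability $1-e^{-\Omega(n)}$. For the concentration of $\Sigma$ I would split $D_j=c_j+r_j^W$, where $W:=V\setminus\{v_1,\dots,v_{n/4}\}$ (so $|W|=n/2$), $r_j^W:=\sum_{w\in W}X_{u_jw}X_{v_jw}$, and $c_j$ collects the rest: conditioning on all edges among the $n/2$ matching vertices makes every $c_j$ a constant while leaving the $r_j^W$ mutually independent and independent of the conditioning (each still a sum of $|W|$ i.i.d.\ Bernoulli$(p^2)$'s). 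Then, conditionally, $\Sigma$ is a sum of $n/4$ independent variables in $[0,\tfrac14]$, and Hoeffding reduces matters to showing $\E[\Sigma\mid\text{conditioning}]\ge C\sqrt{n\log n}$ with high probability.

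Two ranges of $t$ are easy. If $|t|\le c_1n$ with $c_1$ small enough that $2p^2n\theta<\pi$, then on the event $D_j\in[p^2n/2,2p^2n]$ for all $j$ we have $\|\theta D_j/2\pi\|=\theta D_j/2\pi\ge\theta p^2n/(4\pi)$, so $\Sigma=\Omega(n\cdot\theta^2n^2)=\Omega(t^2/n)=\Omega(n^{0.1})$ using $|t|\ge n^{0.55}$. If $|t|\ge Cn^{3/2}$ for a suitable constant $C$, then $|p^2e^{i\theta}+1-p^2|^{|W|}\le\exp(-8p^2(1-p^2)|W|\|\theta/2\pi\|^2)\le\tfrac12$, whence $\E[\|\theta(r^W+c)/2\pi\|^2]\ge\tfrac1{2\pi^2}\bigl(1-|\E e^{i\theta r^W}|\bigr)\ge\tfrac1{4\pi^2}$ for every constant $c$; so $\E[\Sigma\mid\text{conditioning}]=\Omega(n)$ and $\Sigma=\Omega(n)$ with probability $1-e^{-\Omega(n)}$.

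The hard part — which I expect to be the main obstacle — is the intermediate window $c_1n\le|t|\le Cn^{3/2}$, and especially the neighbourhoods of the ``resonant'' frequencies $\theta\in\tfrac{2\pi}{p^2n}\mathbb Z$. There $\theta D_j\approx\theta p^2n$ is near a multiple of $2\pi$, so the dominant (degree-one in the edge indicators) part of $S_n$ contributes essentially no phase, and the matching conditioning sees only the $\Theta(\sqrt n)$-scale fluctuations of the $\Theta(n)$ coefficients $D_j$, giving merely $\Sigma=\Theta(\theta^2n^2)=\Theta(t^2/n^2)$, which is $O(1)$ near $|t|=\Theta(n)$ and thus useless. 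To beat the resonances one must expose the second-order fluctuation of the triangle count: e.g.\ take $|U|,|V|$ both $\Theta(n)$, reveal only $X_{\binom V2}$, and study the increment $\Delta:=S_n-(\text{triangles inside }V)$ as a function of the $\Theta(n^2)$ $U$--$V$ edges, whose linearisation attaches to the edge $\{u,v\}$ a coefficient $\approx p(\deg_V(v)+\deg_U(u))$ — a quantity that still varies on scale $\sqrt n$ but is now carried by $\Theta(n^2)$ independent bits, so that even at a resonance $\sum\|\theta\cdot(\text{coef})/2\pi\|^2=\Theta(n^2\cdot\theta^2n)=\Theta(t^2/n)=\Omega(n)$. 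The catch is that $\Delta$ is genuinely quadratic, not linear, in the $U$--$V$ edges, so Lemma~\ref{lem:coslem} does not apply verbatim; making this work needs a further conditioning that turns the relevant quadratic form into a linear one while controlling the cross terms (or a direct estimate for the characteristic function of a random quadratic form), together with a short number-theoretic argument ensuring no frequency is simultaneously resonant for both conditioning schemes. I expect this resonant regime, and the bookkeeping required to stitch the two conditioning arguments together across all of $c_1n\le|t|\le Cn^{3/2}$, to be where essentially all of the work lies.
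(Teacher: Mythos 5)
Your framework (reveal everything except a set of independent edges, apply Lemma~\ref{lem:coslem} coordinatewise, then prove concentration of the resulting sum $\Sigma$ of $\left\|\cdot\right\|^2$ terms) is sound, and the two ranges you actually handle --- $n^{0.55}\le |t|\le c_1 n$ via the non-wraparound bound $\Sigma=\Omega(t^2/n)$, and $|t|\ge Cn^{3/2}$ via equidistribution of $\theta r^W$ mod $2\pi$ --- both check out. But the window $c_1 n\le |t|\le Cn^{3/2}$ is a genuine gap, not a routine stitching job: as you yourself observe, at the resonant frequencies $\theta\in\frac{2\pi}{p^2 n}\mathbb{Z}$ the matching coefficients $D_j\approx p^2 n$ contribute essentially nothing, and the scheme you sketch to fix this (revealing only $X_{\binom{V}{2}}$ and analyzing the increment, which is quadratic in the $U$--$V$ edges) is left entirely open --- you explicitly defer both the treatment of the quadratic form and the ``no simultaneous resonance'' argument. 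Since this window lies squarely inside $[n^{0.55},\pi\sigma_n]$, the lemma is not proved.

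The missing idea in the paper is a squaring (independent-copy) trick that dissolves the resonance rather than dodging it. Condition on the edges inside $U$, write $S= S_U+\sum_{u}\langle P(A_u),B\rangle+\sum_{\{u,u^*\}\in E_U}\langle A_u,A_{u^*}\rangle+Q(B)$ where $A_u$ is the $U$--$V$ neighborhood vector of $u$ and $B$ the adjacency vector of $G|_V$, and apply $|\E[\cdot]|^2\le \E_B\bigl[|\E_{\mathbf A}[\cdot]|^2\bigr]$. Expanding the square with an independent copy $\mathbf A'$ cancels $Q(B)$ and leaves an expression that is \emph{linear} in $B$ with coefficient vector $h_{\mathbf A,\mathbf A'}=\sum_u(P(A_u)-P(A'_u))$, so Lemma~\ref{lem:coslem} applies verbatim to $\E_B$. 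The point is that each coordinate $(h)_e$ is a difference of two $\mathrm{Bin}(|U|,p^2)$ variables: it has mean zero and is anti-concentrated at scale $\sqrt{|U|}$, so with $|U|=n/2$ one has $|(h)_e|\in(|U|^{0.49},|U|^{0.51})$ for most $e$ with high probability, and then $t|(h)_e|/(2\pi\sigma)$ lies strictly in $(0,1/2)$ for the entire range $n^{0.55}\le|t|\le n^{1.01}$ --- no frequency is resonant for a mean-zero coefficient of typical size $\sqrt n$. The complementary range $|t|\ge n^{1.001}$ is handled by the same identity with $|U|=1$, where $(h)_e\in\{0,\pm1\}$ and $\Theta(n^2)$ coordinates are nonzero. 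If you want to salvage your write-up, replacing your sketched ``second conditioning scheme'' with this Cauchy--Schwarz linearization is the step you need; without it, the argument does not close.
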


We now proceed to bound~\eqref{tobound}.

  By Lemma \ref{lem:tconst},
   \[ \int\limits_{R_1} \left|\psi_n(t) - e^{-t^2/2}\right|\dt \rightarrow 0,\]
for any fixed constant $A$.
   
   For $R_2$ and $R_3$ we have the following,
   \begin{align*}
   \int\limits_{R_2 \cup R_3} \left|\psi_n(t) - e^{- t^2/2}\right|\dt & \leq
   \int\limits_{R_2 \cup R_3} \left|\psi_n(t)\right|\dt + \int\limits_{R_2 \cup R_3} \left|e^{-t^2/2}\right|\dt 
   \end{align*}
   By Lemma \ref{lem:smallt} and Lemma \ref{lem:bigt}, there exists constants $D = D(p), \delta > 0$ such that, $\left|\psi_n(t)\right| \leq \frac{D}{t^{1+\delta}}$ for all $n$ and all $t$ with $|t| \in (0,\pi \sigma_n]$. Therefore, 
   \[\int\limits_{R_2 \cup R_3} \left|\psi_n(t) - e^{- t^2/2}\right|\dt \leq \int\limits_{R_2 \cup R_3} \left|\frac{D}{t^{1 + \delta}}\right|\dt + \int\limits_{R_2 \cup R_3} \left|e^{- t^2/2}\right|\dt.\]
   Since $D/|t|^{1+\delta}$ and $e^{-t^2/2}$ both have finite integral over $(-\infty,-1) \cup (1,\infty)$, the last line above can be made smaller than any $\epsilon$ for large enough constant $A = A(\epsilon,p)$. 
   \end{proof}

\section{Proof sketch for bounding $|\psi_n(t)|$}
\label{sec:exposition}

In this section we sketch with some more detail the strategy used to bound the characteristic function 
\[\psi_n(t) \defeq \E[e^{it R_n}].\]
As a warm up, suppose that $R_n$ was the sum of $n$ i.i.d random variables $X_i$. Then, by independence, 
  $$ \psi_n(t) = \E\left[e^{it \sum\limits_{i = 1}^nX_i}\right] = \prod\limits_{i = 1}^n \E\left[e^{it X_i}\right] = \E\left[e^{it X_1}\right]^n.$$
  Thus if $\left|\E[e^{it X_1}]\right|$ is bounded sufficiently far from 1, it would follow that $|\psi_n(t)|$ is small. Of course in our case $R_n$ is the sum of dependent random variables, and one does not immediately have the expression decompose as a product. The idea that gets around this issue is to first reveal a subset $F$ of the edges and then, conditioning on the values of the edges in $F$ (assuming some nice event $\Lambda$ occurs), show that the expectation is small. For certain choices of $F$ the conditional expectation {\em does} decompose as a product, and thus the estimation becomes easier. If the good event $\Lambda$ happens with high enough probability, then one has successfully bounded $\psi_n(t)$. 
  
We now show an argument that bounds the $\psi_n(t)$ when $n^{1/2} \ll |t| \ll n$.
  For starters, suppose $F$ was all the edges of $\binom{[n]}{2}$ except for a perfect matching $M$, and let $X_F$ denote the indicator vector for the edges in $F$ that appear in $G$. For $e = \{u,v\} \in M$ let $C_e$ denote the number of paths of length 2 from $u$ to $v$ that appear in $G$ (note any such path must consist of edges in $F$). Then conditioned on the value of $X_F$, the expectation becomes 
  \[\E\left[e^{it(C + \sum\limits_{e \in M}C_eX_e)/\sigma_n}\right]\]
  where $C$ denotes the number of triangles that appear consisting only of edges in $F$. Note that $C$ and the $C_e$ are all constants conditioned on the value of $X_F$. Also, each $C_e = C_e(X_F)$ is a binomial random variable, and thus each is concentrated around $np^2$. Thus for a ``typical'' value of $X_F$ one has (roughly)
  \begin{align*} \left| \E[e^{it R_n}\mid X_F]\right| & = \left|\E[e^{it(\sum\limits_{e \in M} C_e X_e)/\sigma_n }]\right| \\
  & \approx \left|\prod\limits_{e \in M}\E[e^{it np^2 X_e /\sigma_n]}\right| \\
  & \leq \left(1-8 p (1-p) \left\|\frac{tnp^2}{2\pi \sigma_n} \right\|^2\right)^{n/2} \qquad \mbox{(applying Lemma \ref{lem:coslem})} \\
  & \approx \left(1-8 p (1-p) \left(\frac{tnp^2}{2\pi \sigma_n} \right)^2\right)^{n/2} \qquad \mbox{(since $\sigma_n = \Theta(n^2)$ and $|t| \ll n$)} \\
  & \approx \left(1-\Theta(t^2/n^2)\right)^{n/2} \\
  & \approx e^{-\Theta(t^2/n)}.
  \end{align*}  
  Thus if $|t| \gg n^{1/2}$ the above will be small.

  
  In Section \ref{sec:smallt} we push the above analysis to cover the range where $t \leq n^{.55}$. There we instead let $M$ be a bipartite subgraph obtained by taking a disjoint union of $k$ perfect matchings, where $k$ is chosen to depend on $t$. As above we first reveal all edges in $F \defeq \binom{[n]}{2} - M$ and then condition on the value of $X_F$. We then count triangles according to how many edges are in $M$, letting $C$, $Y$, and $Z$ denote the number of triangles with $0$,$1$, and $2$ edges in $M$ respectively. As before $C$ will be a constant conditioned on $X_F$, and $Y = \sum\limits_{e \in M}C_e X_e$ is the sum of $nk/2$ independent random variables. 
  
  For $k$ large enough, $\E[e^{itY/\sigma_n}]$ will be small conditioned on a ``typical'' $X_F$, and the analysis follows just as above because the expectation decomposes as a product. The difficulty now is $Z$ is a degree 2 polynomial in the variables $\{X_e: e\in M\}$, and thus $\E[e^{it(Y+Z)/\sigma_n}]$ does not decompose as a product even after conditioning on $X_F$. However, by estimating the variance of $Z$ we will find that $tZ/\sigma_n$ will be tightly concentrated in an interval of size $o(1)$, whereas $tY/\sigma_n$ will be roughly uniform mod $2\pi$. Therefore, although $Z$ has a complicated dependence on $Y$, $t(Y+Z)/\sigma_n$ will still be roughly uniform mod $2\pi$ and $|\psi_n(t)|$ will be small. It should be noted that we currently do not know how to prove a stronger bound than $|\psi_n(t)| \leq 1/t^{1+\delta}$ in this range (for contrast, the argument with a single perfect matching implies an exponentially small bound). This seems to be a major obstacle for obtaining a stronger quantitative local limit law.

The above argument is not delicate enough to deal with arbitrary $t$ with $|t| \gg n$.
In Section~\ref{sec:bigt}, we use a different conditioning argument to bound $\psi_n(t)$ for all $t$ with $|t| \in [n^{.55}, \pi\sigma_n]$.
This argument is based on partitioning ${[n] \choose 2}$ into two sets $E$ and $F$,  
and then studying the difference between the number of triangles in the two random graphs $(X_E, X_F)$ and $(X_E', X_F)$ (where $X_E'$ is an independent copy of the
random variable $X_E$).

\section{Small $|t|$}

In this section we prove Lemma~\ref{lem:tconst}.

\noindent
{\bf Lemma \ref{lem:tconst} (restated).} {\it
Let $A$ be a fixed positive real number. Then
\[ \int\limits_{-A}^{A} \left| \psi_n(t) - e^{- t^2/2}\right|\dt \rightarrow 0\]
as $n \rightarrow \infty$.}
\medskip

The proof essentially follows from the central limit theorem for triangle counts. We provide some of the details by applying a few standard results from probability theory regarding the method of moments. We begin with some additional preliminaries that we borrow (with minor changes) from Durrett's textbook ``Probability: Theory and Examples'' \cite{durrett2010probability}. 

For a random variable $X$, its {\em distribution function} is the function $F(x) \defeq \Pr[X \leq x]$. A sequence of distribution functions is said to {\em converge weakly} to a limit $F$ if $F_n(x) \rightarrow F(x)$ for all $x$ that are continuity points of $F$. A sequence of random variables $X_n$ is said to {\em converge in distribution} to a limit $X_{\infty}$ (written $X_n \xrightarrow[]{d} X_{\infty}$) if their distribution functions converge weakly. 

The moment method gives a useful sufficient condition for when a sequence of random variables converge in distribution. 
\begin{theorem}
 Let $X_n$ be a sequence of random variables. Suppose that $\E[X^k]$ has a limit $\mu_k$ for each $k$ and
 \[ \limsup\limits_{k \rightarrow \infty} \mu_{2k}^{1/2k}/2k < \infty;\]
 then $X_n \xrightarrow[]{d} X_{\infty}$ where $X_{\infty}$ is the unique distribution with the moments $\mu_k$. 
\end{theorem}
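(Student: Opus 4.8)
The plan is to run the classical method-of-moments argument, whose four steps are: tightness of $(X_n)$ so that subsequential weak limits exist; identification of the moments of any such limit as the $\mu_k$; the observation that the stated growth bound is exactly Carleman's condition, which makes the moment sequence $(\mu_k)$ determine a unique distribution; and the standard subsequence principle to upgrade ``every subsequential limit is $X_\infty$'' to ``$X_n \xrightarrow[]{d} X_\infty$''. First, since $\E[X_n^2] \to \mu_2 < \infty$ the second moments are bounded by some $M$, so Markov's inequality gives $\Pr[|X_n| > L] \le M/L^2$ uniformly in $n$; hence $(X_n)$ is tight, and by Helly's selection theorem every subsequence of the distribution functions has a further subsequence converging weakly to the distribution function of some random variable $X_\infty$.

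Next I would identify the moments of such an $X_\infty$. Fix a weakly convergent subsequence $X_{n_j} \xrightarrow[]{d} X_\infty$. For each fixed $k$, the hypothesis $\E[X_{n_j}^{2k}] \to \mu_{2k} < \infty$ gives $\sup_j \E[|X_{n_j}|^{2k}] < \infty$, which implies the family $\{X_{n_j}^k\}_j$ is uniformly integrable. Since $x \mapsto x^k$ is continuous, $X_{n_j}^k \xrightarrow[]{d} X_\infty^k$, and convergence in distribution together with uniform integrability yields $\E[X_{n_j}^k] \to \E[X_\infty^k]$. But $\E[X_{n_j}^k] \to \mu_k$ by hypothesis, so $\E[X_\infty^k] = \mu_k$ for every $k$; in particular the $\mu_k$ really are the moments of an honest probability distribution.

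For uniqueness, note that $\limsup_k \mu_{2k}^{1/2k}/(2k) = c < \infty$ forces $\mu_{2k}^{-1/2k} \ge 1/(2(c+1)k)$ for all large $k$, so $\sum_k \mu_{2k}^{-1/2k} = \infty$ — this is Carleman's condition, and Carleman's theorem then guarantees that there is at most one distribution on $\mathbb R$ with moments $(\mu_k)$; call it $X_\infty$. Consequently every subsequential weak limit of $(X_n)$ equals $X_\infty$ in distribution. To finish, apply the subsequence principle: if $F_n$ did not converge weakly to $F_\infty$, there would be a continuity point $x$ of $F_\infty$, an $\varepsilon>0$, and a subsequence with $|F_{n_j}(x)-F_\infty(x)|\ge\varepsilon$; by tightness this has a further weakly convergent sub-subsequence whose limit must be $F_\infty$, a contradiction. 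Hence $X_n \xrightarrow[]{d} X_\infty$.

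The genuinely non-trivial input is Carleman's determinacy theorem used in the uniqueness step: extracting Carleman's condition from the given growth bound is a one-line computation, but the determinacy statement itself is a classical theorem (proved via quasi-analytic Denjoy--Carleman classes, or equivalently through the Hamburger moment-problem machinery), and a fully self-contained account would need to include its proof. Every other ingredient — tightness via a second-moment bound, uniform integrability from an $L^{2k}$ bound, the continuous-mapping step, and the subsequence principle — is routine soft analysis, so I would cite Carleman and Helly and spend the remaining effort only on making the uniform-integrability bookkeeping precise.
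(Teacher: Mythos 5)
Your proof is correct and is the standard method-of-moments argument (tightness from the bounded second moments, Helly selection, uniform integrability from the $L^{2k}$ bounds to identify the moments of any subsequential limit, Carleman's condition for determinacy, and the subsequence principle). The paper does not prove this theorem at all --- it is quoted verbatim as a known result from Durrett's textbook --- so there is nothing to compare against; your write-up is exactly the textbook proof being cited, and the only external input you rely on (Carleman's determinacy theorem) is correctly identified and correctly derived from the stated growth hypothesis.
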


In the Appendix we provide the standard calculation that $\E[R_n^k] \rightarrow \mu_k$ for all $k$, where 
\[ \mu_k \defeq \left\{ 
  \begin{array}{l l}
    (k-1)!! & \quad \text{if $k$ is even}\\
    0 & \quad \text{if $k$ is odd}
  \end{array} \right.\]
are the moments of $N(0,1)$. It is easy to check that these moments do not grow too quickly and thus the theorem implies the well known central limit theorem for triangle counts:
\begin{equation}\label{eq:CLT}
  R_n \xrightarrow[]{d} N(0,1).
\end{equation}
  
Durrett also provides a theorem that relates convergence in distribution to pointwise convergence of characteristic functions. 
\begin{theorem}
{\bf Continuity Theorem.}  Let $X_n, 1 \leq n \leq \infty$, be random variables with characteristic functions $\phi_n$. If $X_n \xrightarrow[]{d} X_{\infty}$, then $\phi_n(t) \rightarrow \phi_{\infty}(t)$ for all $t$. 
\end{theorem}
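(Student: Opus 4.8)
The plan is to prove the standard ``easy direction'' of the continuity theorem by reducing it to the following assertion: weak convergence of distribution functions implies $\E[f(X_n)] \to \E[f(X_\infty)]$ for every bounded continuous $f : \mathbb{R} \to \mathbb{R}$. Granting this, the theorem follows at once: for each fixed $t$ the functions $x \mapsto \cos(tx)$ and $x \mapsto \sin(tx)$ are continuous and bounded by $1$, and $\phi_n(t) = \E[e^{itX_n}] = \E[\cos(tX_n)] + i\,\E[\sin(tX_n)]$, so applying the reduction to each of these two functions and adding the real and imaginary parts gives $\phi_n(t) \to \phi_\infty(t)$.

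To carry out the reduction, fix a bounded continuous $f$ with $|f| \le M$ and fix $\epsilon > 0$. First I would control the tails. Writing $F \defeq F_\infty$, since $F(-\infty) = 0$, $F(+\infty) = 1$, and $F$ has at most countably many discontinuities, I can choose continuity points $a < b$ of $F$ with $F(a) < \epsilon$ and $F(b) > 1 - \epsilon$; because $a,b$ are continuity points, $F_n(a) \to F(a)$ and $F_n(b) \to F(b)$, so for all large $n$ we have $\Pr[X_n \notin (a,b]] = F_n(a) + 1 - F_n(b) < 3\epsilon$. Next, using uniform continuity of $f$ on the compact set $[a,b]$, I would pick a partition $a = a_0 < a_1 < \cdots < a_m = b$ whose breakpoints are all continuity points of $F$ (again available because only countably many points are excluded) and fine enough that $f$ oscillates by at most $\epsilon$ on each $(a_{j-1}, a_j]$. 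Set $g = \sum_{j=1}^m f(a_j)\,\mathbf{1}_{(a_{j-1},a_j]}$; then $|f - g| \le \epsilon$ on $(a,b]$ and $|f - g| \le M$ everywhere, so $|\E[f(X_n)] - \E[g(X_n)]| \le \epsilon + M\,\Pr[X_n \notin (a,b]]$, and likewise $|\E[f(X_\infty)] - \E[g(X_\infty)]| \le \epsilon + 2M\epsilon$.

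It remains to observe that $\E[g(X_n)] \to \E[g(X_\infty)]$, which is exactly where the hypothesis is used: $\E[g(X_n)] = \sum_{j=1}^m f(a_j)\bigl(F_n(a_j) - F_n(a_{j-1})\bigr)$ is a finite sum, and since each $a_j$ is a continuity point of $F$ we have $F_n(a_j) \to F(a_j)$, so this converges to $\E[g(X_\infty)]$. Combining the three estimates by the triangle inequality gives $\limsup_n \bigl|\E[f(X_n)] - \E[f(X_\infty)]\bigr| = O\bigl((M+1)\epsilon\bigr)$, and letting $\epsilon \downarrow 0$ completes the reduction, hence the theorem. The only nontrivial point — and thus the main obstacle — is the bookkeeping in this reduction step: one must simultaneously truncate the tails outside $[a,b]$ and approximate $f$ by a step function whose jump points avoid the countably many discontinuities of $F$, so that the pointwise hypothesis $F_n(x) \to F(x)$ is legitimately applicable at every breakpoint; everything else is the triangle inequality together with the boundedness and continuity of $\cos$ and $\sin$. (An alternative route would invoke the Skorokhod representation theorem to realize the $X_n$ on a common probability space with $X_n \to X_\infty$ almost surely, after which bounded convergence gives $\E[e^{itX_n}] \to \E[e^{itX_\infty}]$ directly, trading the step-function argument for a different black box.)
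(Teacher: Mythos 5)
Your proof is correct, but note that the paper does not actually prove this statement: the Continuity Theorem is quoted from Durrett's textbook and used as a black box (together with the moment-method CLT, equation~\eqref{eq:CLT}) to conclude $\psi_n(t) \to e^{-t^2/2}$ pointwise in the proof of Lemma~\ref{lem:tconst}. What you have supplied is the standard self-contained argument for this easy direction: reduce to the claim that weak convergence of the distribution functions implies $\E[f(X_n)] \to \E[f(X_\infty)]$ for every bounded continuous $f$, prove that claim by truncating at continuity points $a<b$ of $F_\infty$ and approximating $f$ on $(a,b]$ by a step function whose breakpoints avoid the countably many discontinuities of $F_\infty$, and then apply it to $x \mapsto \cos(tx)$ and $x \mapsto \sin(tx)$. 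All the steps check out: the tail estimate at the continuity points $a,b$, the availability of continuity points as breakpoints, and the finite-sum convergence $\E[g(X_n)] \to \E[g(X_\infty)]$ are each handled correctly, and the triangle-inequality bookkeeping closes the argument. Durrett's own route (the one the paper implicitly leans on) is the alternative you mention: Skorokhod representation to get almost-sure convergence on a common space, then bounded convergence. Your step-function argument is more elementary, trading that representation machinery for the partition bookkeeping; either way the theorem holds, and nothing in the paper's use of it is affected.
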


Applying this with (\ref{eq:CLT}), we conclude that $\psi_n(t) \rightarrow e^{-t^2/2}$ for all $t$. To finish the proof of Lemma \ref{lem:tconst} we apply the dominated convergence theorem to conclude, for any fixed $A$, that

\[ \int\limits_{-A}^{A} \left| \psi_n(t) - e^{- t^2/2}\right|\dt \rightarrow 0\]
as $n \rightarrow \infty$.

\section{Intermediate $|t|$}\label{sec:smallt}

In this section, we prove Lemma~\ref{lem:smallt}.

\noindent
{\bf Lemma~\ref{lem:smallt} (restated).}\ {\it  There exists a sufficiently large constant $D = D(p)$ and $\delta > 0$ such that, for all $t$ with $|t| \in (0,n^{0.55}]$,
  \[|\psi_n(t)| = |\E[e^{itR_n}]| = |\E[e^{itS_n/\sigma_n}]| \leq D/|t|^{1+\delta}.\]
}
\medskip

Note that trivially $|\psi_n(t)| \leq 1$, and thus the lemma already holds for constant sized $t$. Thus we will assume that $t$ and $n$ are both bigger than a sufficiently large constant $D(p)$. To make the exposition simpler, we will assume $n$ is even (however, the same argument can be easily seen to apply when $n$ is odd).

We simplify notation by denoting $R_n$ by $R$, $S_n$ by $S$, and $\sigma_n$ by $\sigma$. Partition $[n]$ into sets $U,V$ both of size $n/2$ and let $P \subseteq {[n] \choose 2}$ be the complete bipartite graph between vertex sets $U,V$.  Let $k < \frac{n}{10^{10}}$ be a positive integer to be determined later. Let $M_1,\cdots, M_{k} \subseteq P$ be pairwise disjoint perfect matchings between $U$ and $V$. Let $E = M_1 \cup M_2 \cup \cdots \cup M_k$, and let $F = {[n] \choose 2} \setminus E$.

Recall that for the random graph $G \in G(n,p)$, we use $X_e$ to denote the indicator for whether edge $e$ appears in $G$. 
We also use $X_E$ and $X_F$ to denote the $\{0,1\}^E$-valued random variable $(X_e)_{e \in E} $ and the $\{0,1\}^F$-valued random variable $(X_e)_{e \in F}$ respectively. Let $C(X_F), Y(X_E, X_F)$ and $Z(X_E, X_F)$ be random variables that count the number of triangles in $G(n,p)$ which have 0,1, and 2 edges in $E$ respectively (note that, by construction of $E$, no triangle may have all 3 edges in $E$).
Thus we have  $S = C(X_F) + Y(X_E, X_F) + Z(X_E, X_F)$.

We define:
$$\zeta = \E_{X_E, X_F} [ Z(X_E, X_F) ].$$

We now work towards bounding $\left|\E[e^{itS/\sigma}|]\right|$:
 \begin{align*}
 &\left|\E[e^{itS/\sigma}]\right|  = \left|\E_{X_E, X_F}[e^{it(C(X_F) + Y(X_E, X_F) + Z(X_E, X_F))/\sigma}]\right| \\
   & = \left|\E_{X_E, X_F}[ e^{it(C(X_F) + Y(X_E, X_F) + \zeta)/\sigma} + e^{it(C(X_F) + Y(X_E, X_F) + Z(X_E, X_F))/\sigma} - e^{it(C(X_F) + Y(X_E, X_F) + \zeta)/\sigma}] \right|
   \\
   & \leq \left| \E_{X_E, X_F}[e^{it(C(X_F) + Y(X_E, X_F) + \zeta)/\sigma}]\right| +  \E_{X_E, X_F}\left[ \left| e^{it(Z(X_E, X_F))/\sigma} - e^{it\zeta/\sigma}\right|\right]
 \end{align*}
 
 We bound each of the two terms separately in the following two lemmas.
We will then use these lemmas to conclude the proof of Lemma~\ref{lem:smallt}.
 
 \begin{lemma} \label{lem:lemY}
  \[\left|\E_{X_E, X_F}[e^{it(C(X_F) + Y(X_E, X_F) + \zeta)/\sigma}]\right| \leq e^{-\Theta(t^2k/n)}.\]
 \end{lemma}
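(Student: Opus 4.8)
The goal is to bound $\left|\E_{X_E, X_F}[e^{it(C(X_F) + Y(X_E, X_F) + \zeta)/\sigma}]\right|$ by $e^{-\Theta(t^2 k/n)}$. The key structural observation is that, conditioned on $X_F$, the quantity $C(X_F) + \zeta$ is a constant (a deterministic function of $X_F$, with $\zeta$ genuinely a constant), so it contributes only a unit-modulus phase factor and may be pulled out of the inner expectation. Thus it suffices to bound $\E_{X_F}\left[ \left| \E_{X_E}[e^{it Y(X_E, X_F)/\sigma} \mid X_F] \right| \right]$. Now the crucial point is that $Y$ counts triangles with exactly one edge in $E$: writing $E = M_1 \cup \dots \cup M_k$ as a disjoint union of matchings, each such triangle is determined by choosing an edge $e = \{u,v\} \in E$ and a vertex $w$ with $\{u,w\}, \{v,w\} \in F$ both present in $G$; hence $Y(X_E, X_F) = \sum_{e \in E} C_e(X_F) X_e$, where $C_e(X_F)$ is the number of length-2 paths in $G$ (through $F$-edges) joining the endpoints of $e$. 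Since the edges of $E$ are disjoint from $F$, conditioned on $X_F$ the variables $\{X_e : e \in E\}$ are independent, so the inner expectation factors:
\[
\E_{X_E}\left[e^{it Y/\sigma} \mid X_F\right] = \prod_{e \in E} \E_{X_e}\left[e^{it C_e(X_F) X_e / \sigma}\right].
\]

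First I would apply Lemma~\ref{lem:coslem} to each factor, giving
\[
\left| \E_{X_E}[e^{itY/\sigma} \mid X_F] \right| \leq \prod_{e \in E}\left(1 - 8p(1-p)\left\| \frac{t C_e(X_F)}{2\pi \sigma} \right\|^2\right) \leq \exp\left(-8p(1-p)\sum_{e \in E}\left\| \frac{t C_e(X_F)}{2\pi\sigma}\right\|^2\right).
\]
The task is then to show that for a typical $X_F$, the sum $\sum_{e \in E}\left\| t C_e(X_F)/(2\pi\sigma)\right\|^2$ is $\Omega(t^2 k / n)$. The heuristic is clean: each $C_e$ is a sum of roughly $n/2$ independent indicators (ranging over candidate apex vertices $w$, whose two connecting edges lie in $F$ for all but $O(k)$ choices), so $C_e \approx (n/2)p^2$ with fluctuations of order $\sqrt{n}$; and since $|t| \leq n^{0.55}$ while $\sigma = \Theta(n^2)$, we have $t C_e / (2\pi\sigma) = \Theta(t/n) = o(1)$, so $\|t C_e/(2\pi\sigma)\| = t C_e/(2\pi\sigma) = \Theta(t/n)$ for each $e$, hence the sum over the $|E| = kn/2$ edges is $\Theta(k n \cdot t^2/n^2) = \Theta(t^2 k/n)$. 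Making this rigorous requires showing the bound $\|t C_e/(2\pi\sigma)\| = \Theta(t/n)$ holds simultaneously for a constant fraction of the edges $e \in E$ with probability $1 - o(1)$ over $X_F$ (say by a first/second moment or concentration argument on the $C_e$'s, or even a crude union bound since $|t|/n \to 0$ ensures we only need $C_e$ to lie in, say, $[(n/3)p^2, n p^2]$, which holds for each fixed $e$ with probability $1 - e^{-\Omega(n)}$); on the exceptional event, which has probability $e^{-\Omega(n)}$, we bound the expectation trivially by $1$, and since $e^{-\Omega(n)}$ dominates $e^{-\Theta(t^2 k/n)}$ (as $t^2 k/n \ll n^{1.1} \cdot n / n = n^{1.1}$... in fact $k < n/10^{10}$ and $t^2 \le n^{1.1}$ gives $t^2 k/n = O(n^{1.1})$, so this needs a slightly more careful comparison — but $e^{-\Omega(n)}$ beats $e^{-cn^{1.1}}$ only if we are careful; more safely, we note $t^2 k/n = O(n^{0.1} k) = O(n^{1.1})$, and we should ensure the exceptional probability is smaller, e.g. $e^{-\Omega(n^2)}$ or $e^{-n^{1.5}}$, which follows from Chernoff on the $kn/2 \gg n$ relevant indicator sums or from a more global concentration statement). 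I would phrase the "typical $X_F$" event carefully to get an error term that is genuinely negligible against $e^{-\Theta(t^2k/n)}$.

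\textbf{Main obstacle.} The main difficulty is the concentration argument controlling $\sum_{e \in E}\|tC_e(X_F)/(2\pi\sigma)\|^2$ from below with sufficiently high probability over $X_F$, and doing so uniformly over the whole range $|t| \in (0, n^{0.55}]$ and all admissible $k$. The subtlety is a competition of scales: the target bound $e^{-\Theta(t^2k/n)}$ can be as weak as $e^{-\Theta(n^{1.1})}$ (when $t \approx n^{0.55}$ and $k \approx n$), so the probability of the bad event where the factorized product fails must beat that — a plain Chernoff bound on a single $C_e$ gives only $e^{-\Omega(n)}$, which is not enough. The fix is to exploit that we have $|E| = kn/2$ edges and we only need a constant fraction of them to be "good": by independence of the relevant indicator variables across different $e$'s (or a suitable martingale/Azuma argument over $X_F$), the number of good edges concentrates around $|E|$ with failure probability $e^{-\Omega(|E|)} = e^{-\Omega(kn)}$, which comfortably dominates $e^{-\Theta(t^2k/n)}$ since $t^2/n \le n^{0.1} \ll n$. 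Care is also needed because the $C_e$ for $e$ in the same matching $M_j$ share the apex vertices but use disjoint edge pairs, and $C_e$ for $e$ in different matchings may share endpoints — but these dependencies are mild and the $X_F$-indicators determining different $C_e$'s involve largely disjoint edge sets, so a direct concentration bound or an Efron–Stein / bounded-differences estimate on the count of good edges should go through.
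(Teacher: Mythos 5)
Your proposal follows essentially the same route as the paper: pull out the deterministic phase $e^{it(C(X_F)+\zeta)/\sigma}$, write $Y(X_E,x_F)=\sum_{e\in E}Y_e(x_F)X_e$, factor the conditional expectation over the independent variables $(X_e)_{e\in E}$, apply Lemma~\ref{lem:coslem}, and use a Chernoff bound on the binomial path-counts $Y_e$ to certify that the exponent is $\Omega(t^2k/n)$ for typical $X_F$. Regarding your "main obstacle": the paper does exactly the crude thing you first describe — a union bound giving a bad-event probability of $e^{-\Theta(n)}$, which it then absorbs into $e^{-\Theta(t^2k/n)}$ — and this is legitimate in context because the lemma is only invoked with $k=O(n/t^{5/3+\delta})$, so that $t^2k/n=O(t^{1/3})\ll n$; the extreme regime $t\approx n^{0.55}$, $k\approx n/10^{10}$ that worries you never arises. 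You are right that a standalone proof valid for all admissible $(t,k)$ would need the stronger concentration you sketch, but note that your bounded-differences fallback does not work as stated: a single $F$-edge can affect the goodness of up to $2k$ of the $Y_e$'s, so McDiarmid applied to the count of good edges yields a vacuous bound, and one would instead need the moment-type argument exploiting independence of $Y_{e_1},\ldots,Y_{e_m}$ for vertex-disjoint $e_1,\ldots,e_m$.
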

 \begin{proof}
 We bound the above expectation by revealing the edges in two stages. We first reveal $X_F$, and show that with high probability
over the choice of $X_F$, some good event occurs. We then show that whenever this good event occurs, the value of the above expectation 
over the random choice of $X_E$ is small.

Formally, using the triangle inequality we get:
\begin{align}
\left|\E_{X_E, X_F}[e^{it(C(X_F) + Y(X_E, X_F) + \zeta)/\sigma}]\right| &= 
\left|\E_{X_F}\left[e^{it(C(X_F) + \zeta)/\sigma} \cdot \E_{X_E} [ e^{it(Y(X_E, X_F))/\sigma}]\right]\right|\\
&\leq 
\E_{X_F}\left[\left| \E_{X_E} [ e^{it(Y(X_E, X_F))/\sigma}]\right| \right].\label{eq:eq1}
\end{align}

 For $e = \{u,v\} \in E$ and a vector $x_F \in \{0,1\}^F$, we let $Y_e(x_F)$ denote the number of paths of length 2 from $u$ to $v$ consisting entirely of edges
$f \in F$ for which $(x_F)_f = 1$ \footnote{This differs from the exposition in Section~\ref{sec:exposition} (where $E$ is a single perfect matching), in that some length-2 paths between $u$ and $v$ here may contain edges in $E$. We do not want to count those paths in $Y_e(x_F)$.}. In this way, for a given $x_{F}$, the random variable $Y(X_E, x_F)$ equals $\sum\limits_{e \in E} Y_e(x_F) X_e$. 
 
Define
$$L = \{ x_F \in \{0,1\}^F \mid \mbox{for some $e \in E$, $Y_e(x_F) < n p^2/2$} \}.$$
Let $\Lambda$ denote the (bad) event that $X_F \in L$.

  \begin{claim} \label{claim:c1}
$$ \Pr_{X_F} [ \Lambda ] \leq e^{-\Theta(n)}.$$
  \end{claim}
  \begin{proof}
Observe that for any given $e \in E$,
the distribution of $Y_e(X_F)$ equals $\text{Bin}(m_e,p^2)$, where $m_e$ equals the number of paths of length $2$ joining the endpoints of $e$,
and consisting entirely of edges in $F$. Also note that we have $m_e \geq n - 2k \geq n(1-1/10^9)$.

By the Chernoff bound, we have:
  \begin{align*}
\Pr[\text{Bin}(m_e, p^2) < n p^2 /2] \leq e^{-n p^2(1-p^2)/200}.
  \end{align*}

Taking a union bound over all $e \in E$, we get the claim.
  \end{proof}

Next, we show that if we condition on $\Lambda$ not occurring, then the desired expectation is small.
  \begin{claim} \label{claim:c2}
For every $x_F \in \{0,1\}^F \setminus L$,
  \[\left|\E\limits_{X_E \in \{0,1\}^E}\left[e^{itY(X_E, x_F)/\sigma} \right]\right| \leq e^{-\Theta(t^2k/n)}. \]
  \end{claim}
  \begin{proof}
  Recall that $Y(X_E, x_F) = \sum\limits_{e \in E} Y_e(x_F) X_e$.
Thus we have:
  \begin{align*}
  \left|\E\limits_{X_E}\left[e^{itY(X_E, x_F)/\sigma}  \right]\right| & = \left|\E\limits_{X_E}\left[e^{it(\sum\limits_{e \in E} Y_e(x_F)X_e)/\sigma}\right]\right| \\
   & = \left| \prod\limits_{e \in E} \E\left[e^{itY_e(x_F) X_e/\sigma}  \right] \right|  \quad\mbox{by the mutual independence of $(X_e)_{e \in E}$} \\
  & \leq  \prod\limits_{e \in E} \left(1 - 8p(1-p)\left\|\frac{tY_e(x_F)}{2 \pi \sigma}\right\|^2\right) \quad \mbox{(applying Lemma \ref{lem:coslem})} \\
  & =  \prod\limits_{e \in E} \left(1 - 8p(1-p)\cdot \left(\frac{tY_e(x_F)}{2 \pi \sigma}\right)^2\right) \quad \mbox{(since $t \leq n^{0.55}$, $Y_e(x_F) \leq n$, and $\sigma = \Theta(n^2)$)}\\
  & \leq \left(1 - 8p(1-p)\cdot \left(\frac{tnp^2}{4 \pi \sigma}\right)^2\right)^{nk/2} \quad \mbox{(since $x_F \in L$).}
  \end{align*}
  Recall that $\sigma = \sqrt{\frac{n(n-1)(n-2)(n-3)D}{2}}$ for some constant $D \leq 1$. Thus $\frac{tnp^2}{4\pi\sigma} \geq \frac{tp^2}{4\pi n}$.
Therefore we may further bound the above expression by:
  \begin{align*}
  & \leq  \left(1 - 8p(1-p)\left(\frac{tp^2}{4 \pi n}\right)^2\right)^{nk/2} \\
  & \leq e^{-\frac{t^2 p^5(1-p) k}{\pi^2 n}} \\
  & = e^{-\Theta(t^2 k/n)}
  \end{align*}
  \end{proof}
 Going back to equation (\ref{eq:eq1}) we have
 \begin{align*}
\left|\E_{X_E, X_F}[e^{it(C(X_F) + Y(X_E, X_F) + \zeta)/\sigma}]\right| &\leq \E_{X_F}\left[\left| \E_{X_E} [ e^{it(Y(X_E, X_F))/\sigma}]\right| \right]\\
&\leq \Pr_{X_F} [ X_F \in L ] + \max_{x_F \in \{0,1\}^F \setminus L} \left| \E_{X_E} [ e^{it(Y(X_E, x_F))/\sigma}]\right|\\
  & \leq e^{-\Theta(n)}  +  e^{-\Theta(t^2k/n)}  \quad \mbox{(applying claims \ref{claim:c1} and \ref{claim:c2})} \\
  & \leq e^{-\Theta(t^2k/n)}.
 \end{align*}
 
  \end{proof}

  \begin{lemma} \label{lem:lemZ}

  \[  \E_{X_E, X_F}\left[\left|e^{it(Z(X_E, X_F))/\sigma} - e^{it\zeta/\sigma}\right|\right]\leq  O\left( t^{3/2 + \delta/2}\left(\frac{k}{n}\right)^{3/2}\right) + O\left(1/t^{1+\delta}\right) \]
  \end{lemma}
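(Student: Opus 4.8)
The plan is to linearize the complex exponential and reduce everything to a second-moment estimate for $Z$. Using $|e^{i\alpha}-e^{i\beta}| = 2|\sin((\alpha-\beta)/2)| \le \min(2,|\alpha-\beta|)$ (and noting the quantity is even in $t$, so we may take $t>0$), we get the pointwise bound $|e^{itZ/\sigma}-e^{it\zeta/\sigma}| \le \min\bigl(2,\tfrac{|t|}{\sigma}|Z-\zeta|\bigr)$. Fix a truncation level $\tau>0$ to be chosen at the end: on the event $\{|Z-\zeta|\le\tau\}$ bound the integrand by $|t|\tau/\sigma$, and on the complement by $2$. Since $\zeta=\E[Z]$, Chebyshev's inequality gives
\[
\E_{X_E,X_F}\bigl[\,|e^{itZ/\sigma}-e^{it\zeta/\sigma}|\,\bigr]\ \le\ \frac{|t|\,\tau}{\sigma}\ +\ 2\,\Pr\bigl[\,|Z-\zeta|>\tau\,\bigr]\ \le\ \frac{|t|\,\tau}{\sigma}\ +\ \frac{2\,\Var(Z)}{\tau^{2}}.
\]
Everything now rests on a good bound for $\Var(Z)$.

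The key step is to show $\Var(Z)=O(nk^{3})$. For this I would use the explicit combinatorial description of the triangles counted by $Z$: since $E=M_1\cup\cdots\cup M_k$ is bipartite between $U$ and $V$, every triangle with exactly two edges in $E$ has an \emph{apex} vertex $x$ incident to both $E$-edges, its two $E$-edges run to two distinct $E$-neighbours $y,z$ of $x$ in the opposite part, and the remaining edge $yz$ automatically lies in $F$. Hence $Z=\sum_{\alpha}W_\alpha$, where $\alpha$ runs over the $\sum_x\binom{\deg_E(x)}{2}=n\binom{k}{2}$ such configurations and $W_\alpha=X_{e_1^{\alpha}}X_{e_2^{\alpha}}X_{f^{\alpha}}$ with $\E[W_\alpha]=p^{3}$, so $\zeta=p^{3}n\binom{k}{2}=\Theta(nk^{2})$. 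Now $\cov(W_\alpha,W_\beta)=0$ unless the triangles $\alpha,\beta$ share an edge, two distinct triangles share at most one edge, and each surviving covariance is $O(1)$; so it suffices to count ordered pairs sharing exactly one edge. There are $\Theta(nk^{2})$ diagonal terms. For pairs sharing an $E$-edge $e$: each $E$-edge lies in exactly $2(k-1)$ of the configurations (the apex being one of the two endpoints of $e$), so there are at most $|E|\cdot(2(k-1))^{2}=\Theta(nk^{3})$ such pairs. For pairs sharing the $F$-edge $f=\{y,z\}$: the number of configurations with that $F$-edge is the codegree $d(y,z)=|\{w:wy,wz\in E\}|\le k$, and $\sum_{f}d(y,z)=\sum_{w}\binom{\deg_E(w)}{2}=\Theta(nk^{2})$, whence $\sum_{f}d(y,z)^{2}\le k\sum_{f}d(y,z)=\Theta(nk^{3})$ pairs. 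Altogether $\Var(Z)=O(nk^{3})$.

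To finish, substitute $\Var(Z)=O(nk^{3})$ and $\sigma=\Theta(n^{2})$ into the displayed inequality, so the bound becomes $O\bigl(|t|\tau/n^{2}\bigr)+O\bigl(nk^{3}/\tau^{2}\bigr)$. Choosing $\tau=n^{1/2}|t|^{1/2+\delta/2}k^{3/2}$ makes the first term $O\bigl(|t|^{3/2+\delta/2}(k/n)^{3/2}\bigr)$ and the second term $O\bigl(nk^{3}/(n\,|t|^{1+\delta}k^{3})\bigr)=O(1/|t|^{1+\delta})$, which is exactly the claimed estimate. (One can in fact skip the truncation entirely: $|e^{i\alpha}-e^{i\beta}|\le|\alpha-\beta|$ together with $\E|Z-\zeta|\le\sqrt{\Var(Z)}=O(n^{1/2}k^{3/2})$ already yields the cleaner $O(|t|(k/n)^{3/2})$, which implies the statement since $|t|\ge 1$; I keep the two-term form only because it mirrors how this estimate is later balanced against Lemma~\ref{lem:lemY}.)

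The only place requiring genuine care is the variance computation, and within it the contribution of pairs of triangles sharing the $F$-edge: a priori $\sum_{f}d(y,z)^{2}$ could be as large as $\Theta(nk^{4})$, and it is the crude bound $d(y,z)\le k$ — valid because every vertex has degree exactly $k$ in $E$ — that pins it at $O(nk^{3})$ and makes the whole argument close. The linearization, the application of Chebyshev, and the optimization over $\tau$ are all routine.
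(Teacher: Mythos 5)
Your proposal is correct and follows essentially the same route as the paper: bound $\Var(Z)=O(nk^{3})$ by counting pairs of edge-sharing triangles in $\Delta'$, apply Chebyshev at the threshold $\tau=n^{1/2}|t|^{1/2+\delta/2}k^{3/2}$ (the paper's $\lambda$), and split the expectation of $|e^{it(Z-\zeta)/\sigma}-1|$ according to that event. Your variance count is just a more explicit version of the paper's observation that each triangle of $\Delta'$ meets at most $6k$ others, so there is nothing further to add.
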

  \begin{proof}
  Simplifying the expression we want to bound, we get:
  \begin{align*}
 \E_{X_E, X_F}\left[\left| e^{it(Z(X_E, X_F))/\sigma} - e^{it\zeta/\sigma}\right|\right] 
= \E_{X_E, X_F}\left[\left|e^{it ( Z(X_E, X_F) - \zeta)/\sigma} - 1\right|\right].
\end{align*}

    Thus proving the lemma reduces to proving a concentration bound: namely that $Z(X_E, X_F)$ is close to $\zeta$ with high probability.
We will bound $\Var_{X_E, X_F}[Z(X_E, X_F)]$ and apply the Chebyshev inequality. This will give the desired concentration.

Let $\Delta'$ denote the set of triangles in $K_n$ that have exactly $2$ edges in $E$.
For each $r \in \Delta'$, let $T_r(X_E, X_F)$ be the indicator for the triangle $r$ appearing in $G$. For two triangles $r,s \in \Delta'$, write $r \sim s$ if $r$ and $s$ share an edge. Note for any $r \in \Delta'$ there are at most $6k$ triangles $s \in \Delta'$ for which $r \sim s$.

We have:
   \begin{align*}
   \var_{X_E, X_F}[Z(X_E, X_F)] & = \sum\limits_{r \in \Delta'}\sum\limits_{s \in \Delta'} \cov_{X_E, X_F}[T_r(X_E, X_F), T_s(X_E, X_F)] \\
   & = \sum\limits_{r \in \Delta'}\sum\limits_{s \sim r} \cov_{X_E, X_F}[T_r(X_E, X_F),T_s (X_E, X_F)] \quad \mbox{(using independence)}\\
   & \leq |\Delta'|\cdot|6k| \\
   & \leq 6nk^3 \qquad \mbox{(since $|\Delta'| = n\binom{k}{2}$)}
   \end{align*}
   
    Applying Chebyshev's inequality with $\lambda = \sqrt{6} \cdot n^{1/2} \cdot t^{1/2 + \delta/2} \cdot k^{3/2}$ we have
  \begin{align*}
  \Pr_{X_E, X_F}[ |Z(X_E, X_F) - \zeta| > \lambda] & < \frac{\var_{X_E, X_F}[Z(X_E, X_F)]}{\lambda^2} \\
  & < 1/t^{1+\delta} 
  \end{align*}   
   Recall that $\|x\|$ denotes the distance from real number $x$ to the nearest integer. Let $\Lambda$ be the (bad) event that $|Z(X_E, X_F) - \zeta| \geq \lambda$. Using the fact that for any real number $\theta$, $|e^{i\theta}-1| \leq 2\pi \cdot \|\frac{\theta}{2\pi}\|$, we have 
  \begin{align*}
  \E_{X_E, X_F}\left[ \left| e^{it(Z(X_E, X_F)-\zeta)/\sigma} - 1 \right|\right] & \leq 2\pi \E_{X_E, X_F}\left[\left\|\frac{t(Z(X_E, X_F)-\zeta)}{2\pi \sigma}\right\|\right] \\
   & \leq  2\pi \cdot \Pr[\Lambda^c] \cdot \frac{t\lambda}{2\pi\sigma} + 2\pi \cdot \Pr[\Lambda] \cdot \frac{1}{2} \\
   & \leq  \frac{t\lambda}{\sigma} + \pi \cdot \Pr[\Lambda] \\  
  & \leq \sqrt{6} \cdot t^{3/2 + \delta/2} \cdot \frac{k^{3/2} \cdot n^{1/2}}{\sigma} + \frac{\pi}{t^{1+\delta}}\\
&\leq O\left(t^{3/2 + \delta/2} \cdot \left(\frac{k}{n}\right)^{3/2}\right)  + O\left(\frac{1}{t^{1+\delta}}\right). \quad\mbox{(since $\sigma = \Theta(n^2)$)}
  \end{align*}

This concludes the proof of Lemma~\ref{lem:lemZ}.
   \end{proof} 
   To conclude the proof of Lemma \ref{lem:smallt}, we apply Lemma \ref{lem:lemY} and Lemma \ref{lem:lemZ} to get the bound
   
   \begin{equation} \label{eq:eqfinal}
   |\E[e^{itS/\sigma}]| \leq e^{-\Theta(t^2 k/1000n)}+ O\left( t^{3/2 + \delta/2} \cdot \left(\frac{k}{n}\right)^{3/2}\right) + O\left(1/t^{1+\delta}\right)
   \end{equation}
   
   It only remains to check that $k$ may be chosen as to make the right hand side of equation (\ref{eq:eqfinal}) bounded by $O(1/t^{1+\delta})$.
Set $\delta = 0.01$, and observe that for  $\Omega(1) < t < n^{0.55}$, we have the following two relations:
   \[ \frac{n \log^2(t)}{ t^2} = O\left( \frac{n}{t^{5/3 + \delta}} \right),\]
   \[ \frac{n}{t^{5/3 + \delta}} = \omega(1).\]
Thus we may choose $k$ to be an integer satisfying:
   \[k = \Omega(n \log^2(t)/ t^2) \quad \mbox{and} \quad k = O(n/t^{5/3 + \delta}).\]
   For such a $k$ we have
   \[e^{-\Theta(t^2k/n)} \leq O(1/t^{1+\delta})\]
   and
   \[t^{3/2 + \delta/2}\left(\frac{k}{n}\right)^{3/2} = O(1/t^{1+\delta}).\]
   
   This concludes the proof of Lemma~\ref{lem:smallt}.


   \section{Big $|t|$}\label{sec:bigt}
   In this section we prove Lemma~\ref{lem:bigt}.

\noindent {\bf Lemma \ref{lem:bigt} (restated).}\ {\it There exists a sufficiently large constant $D = D(p)$ such that, for all $t$ with  $|t| \in [n^{0.55},\pi \sigma_n]$, it holds that 
  \[|\E[e^{itR_n}]| = |\E[e^{itS_n/\sigma_n}]| \leq D/|t|^{50}.\]}
\medskip

 The choice of 50 here is arbitrary, in fact the lemma will hold for any fixed constant in place of 50 (as long as $D(p)$ is chosen large enough). We only choose a large number here to remind the reader that the obstacle to a better quantitative local limit law lies in bounding $\psi_n(t)$ for $|t|$ in the range $(0,n^{.55}]$.

As in the previous section, since $n$ is fixed we simplify notation by denoting $S_n$ as $S$ and $\sigma_n$ as $\sigma$.

We will break down the proof into two different cases. Both cases will use a common framework,
which we now set up.

Let $[n] = U \cup V$ be a partition of the vertices.
Define $X_U = (X_e)_{e \in {U \choose 2}}$.
For every  $x_U \in \{0,1\}^{{U\choose 2}}$,
we will show that:
$$ \E[ e^{itS/\sigma} | X_U = x_U] \leq O\left(\frac{1}{t^{50}}\right).$$
This will imply the desired bound.

{\bf From now on, we condition on $X_U = x_U$.}

Let $E_U \subseteq {U \choose 2}$ be the induced graph on $U$:
$$ E_U = \left\lbrace \{u, u^*\} \in {U \choose 2} \mid x_{\{u, u^*\} } = 1 \right\rbrace.$$
Note that $E_U$ is determined by $x_U$ and is thus fixed.

For $u \in U$, let $A_u \in \{0,1\}^V$ denote the vector indicating the neighbors of $u$ in $V$.
Thus $A_u = (X_{\{u,v\}})_{v \in V}$.

Let $B \in \{0,1\}^{{V \choose 2}}$ denote the adjacency vector of $G|_V$.
Thus $B = \{X_e \}_{e \in {V \choose 2} }$.

Note that all the entries of the $A_u$'s and $B$ are independent $p$-biased Bernoulli random variables.
We will now express the number of triangles in $G$ in terms of the $A_u$'s and $B$ (here $\langle \cdot, \cdot \rangle$ denotes the standard inner product over $\mathbb R$) :
\begin{itemize}
\item Let $S_U$ denote the number of triangles in $G$ with all three vertices in $U$ (note that $S_U$ is determined by $x_U$ and is thus fixed).

\item  The expression $\sum\limits_{\{u,u^*\} \in E_U} \langle A_u, A_{u^*} \rangle$ counts the number of triangles in $G$ that have exactly two vertices in $U$.

\item Let $P: \{0,1\}^{V} \to \{0,1\}^{{V \choose 2}}$ denote the map
defined by:
$$ P(r)_{\{u,v\}} = r_u \cdot r_v.$$

Then $\sum\limits_{u \in U} \langle P(A_u), B \rangle$ counts the number of triangles in $G$ that have exactly two vertices in $V$. 

\item Let $Q: \{0,1\}^{{V \choose 2}} \to \mathbb N$ denote the map that sends
an adjacency vector $b$ to the number of triangles in the graph represented
by $b$ (that is the triangles whose vertices are contained in $V$).

Thus $Q(B)$ counts the number of triangles in $G$ with all three vertices in $V$.
\end{itemize}

Then we have the following expression for $S$ in terms of the $A_u$'s and $B$.
\begin{align*}
S =  S_U + \sum_{u \in U} \langle P(A_u) , B \rangle + \sum_{\{u,u^*\} \in E_U} \langle A_u, A_{u^*} \rangle +  Q(B).
\end{align*}

We now bound $\E[e^{i t S/\sigma} ]$.
\begin{align*}
|\E[e^{it S/\sigma}]|^2 &= \left|\E_{(A_u)_{u \in U}, B}\left[e^{it ( S_U + \sum_{u \in U} \langle P(A_u), B \rangle + \sum_{\{u, u^*\} \in E_U } \langle A_u, A_{u^*} \rangle + Q(B) ) / \sigma } \right] \right|^2 \\
&\leq   \E_{B}\left[ \left| e^{itQ(B)/\sigma} \cdot   \E_{(A_u)_{u \in U}} \left[ e^{it ( \langle \sum_{u\in U}  P(A_u), B \rangle + \sum_{\{u, u^*\} \in E_U } \langle A_u, A_{u^*} \rangle)/\sigma} \right] \right|^2 \right]\\
&\leq   \E_{B}\left[  \left| \E_{(A_u)_{u \in U}} \left[ e^{it  ( \langle \sum_u P(A_u), B \rangle + \sum_{\{u, u^*\} \in E_U} \langle A_u, A_{u^*} \rangle )/\sigma} \right] \right|^2 \right]\\
&=   \E_{B} \E_{(A_u)_{u \in U}} \E_{(A'_u)_{u \in U}}\left[  e^{it ( \langle \sum_u P(A_u) - P(A'_u), B \rangle + \sum_{\{u, u^*\} \in E_U} \langle A_u, A_{u^*} \rangle - 
 \sum_{\{u, u^*\} \in E_U} \langle A'_u, A'_{u^*} \rangle )/\sigma}  \right] \\
&\quad\quad\mbox{(Where for each $u \in U$, $A'_u$ is an independent copy of $A_u$)}\\
&=   \E_{(A_u)_{u \in U}} \E_{(A'_u)_{u \in U}} \left[ e^{it (\sum_{\{u, u^*\} \in E_U} \langle A_u, A_{u^*} \rangle - 
 \sum_{\{u, u^*\} \in E_U} \langle A'_u, A'_{u^*} \rangle )/\sigma} \cdot \E_{B}  \left[  e^{it  \langle \sum_u P(A_u) - P(A'_u), B \rangle/\sigma}  \right] \right]\\
&=   \E_{(A_u)_{u \in U}} \E_{(A'_u)_{u \in U}} \left[ e^{it (\sum_{\{u, u^*\} \in E_U} \langle A_u, A_{u^*} \rangle - 
 \sum_{\{u, u^*\} \in E_U} \langle A'_u, A'_{u^*} \rangle )/\sigma} \cdot \E_{B}  \left[  e^{it  \langle h_{\mathbf A, \mathbf {A'}}, B \rangle/\sigma}  \right] \right].
\end{align*}
where $\mathbf A = (A_u)_{u \in U}$, $\mathbf {A'} = (A'_u)_{u \in U}$,
and where $h_{\xx} \in {\mathbb Z}^{{V \choose 2}}$ is given by:
$$h_{\xx}  = \sum_{u\in U} (P(A_u) - P(A'_u)).$$
Observe that for each $e \in {V \choose 2}$,
$(h_{\xx})_e$ is distributed as the difference of two binomials of
the form $B(|U|, p^2)$ (but the different coordinates of $h_{\xx}$ are not independent).

Our goal is to show that with high probability over the choice of $\xx$, we have that:
$$C \defeq \left| \E_{B}  \left[  e^{it  \langle h_{\xx}, B \rangle/\sigma}  \right] \right|$$
is small in absolute value.
This will imply that $\E[e^{itS/\sigma}]$ is small, as desired.

We now achieve this goal for $|t| > n^{0.55}$ using two different arguments (to cover two different ranges of $|t|$),
instantiating the above framework with different settings of $|U|$.

\subsection{Case 1: $n^{1.001} \leq |t| < \pi \sigma$}

Suppose $n^{1.001} < |t| < \pi \sigma$. For this argument, we choose $|U| = 1$.

In this case, the coordinates of $h_{\xx}$ have the following joint distribution: Let $J \subseteq V$ be a random subset where each $v \in V$ appears independently with probability $p$. Let $J'$ be an independent copy of $J$ (think of $J$ and $J'$ as two independently chosen neighborhoods of the vertex $u$). Then  the $e$ coordinate of $h_{\xx}$ is $1$ if $e \subseteq J-J'$, $0$ if $e \subseteq J \cap J'$ or $e \subseteq J^c \cap (J')^c$, and $-1$ if $e \subseteq J'-J$. A Chernoff bound implies that with probability at least $1 - e^{-\Theta(n)}$ the symmetric difference of $J$ and $J'$ will have size at least $np(1-p)/2$. In such a case $h_{\xx}$ will have $\binom{np(1-p)/2}{2} = \Theta(n^2)$ non-zero coordinates.
From now on we assume that $\xx$ are such that this event occurs
(and we call such an $\xx$ ``good").

Then we have:
\begin{align*}
C &=  \left| \E_{B}  \left[  e^{it  \langle \sum_u h_{\xx}, B \rangle/\sigma}  \right] \right|\\
&= \left|\E_B \left[ \prod_{e \in {V \choose 2} }  e^{it (h_{\xx})_e B_e / \sigma}  \right] \right|\\
&= \left|\E_B \left[ \prod_{e \in {V \choose 2}, (h_{\xx})_e \neq 0}  e^{it (h_{\xx})_e B_e / \sigma}  \right] \right|\\
&= \left|\prod_{e \in {V \choose 2}, (h_{\xx})_e \neq 0} \E_{B_e} \left[  e^{it (h_{\xx})_e B_e / \sigma}  \right] \right|\\
&\leq \prod_{e \in {V \choose 2}, (h_{\xx})_e \neq 0} \left( 1 - 8p(1-p) \cdot \left\|\frac{t \cdot |(h_{\xx})_e|}{2\pi\sigma}\right\|^2 \right)  \quad \mbox{(by Lemma~\ref{lem:coslem})}\\
&\leq \prod_{e \in {V \choose 2}, (h_{\xx})_e \neq 0} \left( 1 - 8p(1-p)\cdot\left(\frac{t }{ 2\pi \sigma}\right)^2 \right)  \quad \mbox{(since $|(h_{\xx})_e| \in \{0, \pm1\}$ and $|t| < \pi \sigma$)}\\
&\leq e^{-\frac{2p(1-p)t^2}{\pi^2 \sigma^2} \cdot \Theta(n^2)} \quad \mbox{ since $\xx$ is good}\\ 
&\leq e^{-\Theta(t^2/n^2)} \quad \mbox{(since $\sigma = \Theta(n^2)$).}
\end{align*}

Now we use the fact that $t \geq n^{1.001}$ to conclude that $D \leq \exp(-\Theta(n^{0.002}))$.

Taking into account the probability of $\xx$ being good, we get:
$$|\E[e^{itS/\sigma}]|^2 < e^{-\Theta(n)} + e^{-\Theta(n^{0.002})} \ll \frac{1}{t^{100}},$$
as desired.

\subsection{Case 2: $n^{0.55} \leq t < n^{1.01}$}

Suppose $n^{0.55} < t < n^{1.01}$. For this argument, we choose $|U| = n/2$.

As before, we have:
\begin{align*}
C &= \left|\E_B \left[ \prod_{e \in {V \choose 2} }  e^{it (h_{\xx})_e B_e / \sigma}  \right] \right|
\end{align*}

Now for each $e \in {V \choose 2}$, the distribution of $(h_{\xx})_e$ is the difference of two binomials of the form $Bin(|U|, p^2)$.
Thus, we will typically have $(h_{\xx})_e$ around $\sqrt{|U|}$ in magnitude.

For each $e \in {V \choose 2}$, let $\Lambda_e$ be the following bad event (depending on $\xx$):
$|(h_{\xx})_e| \not\in ( |U|^{0.49}, |U|^{0.51})$.
Let $\gamma = \Pr[\Lambda_e]$. By standard concentration and anti-concentration estimates for
Binomial distributions, we have that $\gamma \leq 0.1$ (provided $n$ is sufficiently large, depending on $p$).


Let $\Lambda$ be the bad event that for more than $|V|^2/4$ choices of $e \in {V \choose 2}$,
the event $\Lambda_e$ occurs. 

\begin{lemma}
There is a constant $A$ such that for every $k$:
$$ \Pr[\Lambda] < \frac{k^{Ak}}{|V|^{k}}.$$
\end{lemma}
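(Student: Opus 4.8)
The plan is to exploit the \emph{bounded-dependency} structure of the events $\Lambda_e$. Recall that for $e = \{v,v'\} \in \binom{V}{2}$ we have
$(h_{\xx})_e = \sum_{u \in U}\bigl((A_u)_v (A_u)_{v'} - (A'_u)_v (A'_u)_{v'}\bigr)$,
so $\Lambda_e$ is a function of $\mathbf A$ and $\mathbf{A'}$ only through the coordinates in ``columns'' $v$ and $v'$, i.e.\ through $\{(A_u)_v,(A_u)_{v'},(A'_u)_v,(A'_u)_{v'} : u \in U\}$. Since all coordinates of the $A_u$'s and $A'_u$'s are mutually independent Bernoullis, it follows that whenever $\mathcal M \subseteq \binom{V}{2}$ is a \emph{matching} (a set of pairwise vertex-disjoint edges of $K_V$), the indicators $\{\mathbf 1_{\Lambda_e} : e \in \mathcal M\}$ are mutually independent. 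This is the one structural fact we need, and it is the crux of the argument.

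Concretely, I would fix a proper edge-coloring of $K_{|V|}$ — equivalently, a $1$-factorization, giving a partition $\binom{V}{2} = \mathcal M_1 \sqcup \cdots \sqcup \mathcal M_r$ with $r \le |V|$ and each $\mathcal M_i$ a matching of size at most $|V|/2$. If $\Lambda$ occurs then more than $|V|^2/4$ of the $\Lambda_e$ hold, so by pigeonhole some color class $\mathcal M_i$ contains more than $|V|^2/(4|V|) = |V|/4$ edges $e$ with $\Lambda_e$. But inside a single $\mathcal M_i$ the indicators $\{\mathbf 1_{\Lambda_e}\}_{e \in \mathcal M_i}$ are independent Bernoullis, at most $|V|/2$ of them, each of mean $\gamma \le 1/10$; so by the standard Chernoff/union bound the chance that more than $|V|/4$ of them hold is at most $\binom{|V|/2}{|V|/4}(1/10)^{|V|/4} \le (2/5)^{|V|/4} = e^{-\Omega(|V|)}$. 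Taking a union bound over the $r \le |V|$ color classes yields
$\Pr[\Lambda] \le |V|\cdot e^{-\Omega(|V|)} = e^{-\Omega(|V|)}$.

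It then remains only to repackage this into the stated form. For a sufficiently large absolute constant $A$ and all $|V|$ large enough, $e^{-\Omega(|V|)} < k^{Ak}/|V|^{k}$ for every positive integer $k$: taking logarithms this reads $k\log|V| - Ak\log k < \Omega(|V|)$, and the left-hand side, maximized over $k \ge 1$, is $O\!\bigl(A\,|V|^{1/A}\bigr) = o(|V|)$ (the maximum is attained near $k = |V|^{1/A}/e$, and for large $k$ the left side is even negative). Hence the lemma follows with room to spare. The only genuine obstacle is the dependency among the $\Lambda_e$; once it is dispatched via the $1$-factorization of $K_{|V|}$ — which localizes everything to an honest independent Chernoff bound inside each color class — the rest is bookkeeping. (Note that this argument in fact proves the stronger bound $\Pr[\Lambda] = e^{-\Omega(|V|)}$; the weaker form stated is all that is used later.)
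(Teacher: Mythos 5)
Your proof is correct, but it takes a genuinely different route from the paper's. Both arguments hinge on the same structural fact --- that $\Lambda_{e_1},\dots,\Lambda_{e_m}$ are mutually independent when the $e_i$ are pairwise vertex-disjoint, because $\Lambda_e$ for $e=\{v,v'\}$ depends on $\mathbf A,\mathbf{A'}$ only through the coordinates indexed by $v$ and $v'$ --- but they exploit it differently. The paper centers $Z=\sum_e(Z_e-\gamma)$, bounds the $2k$-th moment $\E[Z^{2k}]\le k^{O(k)}|V|^{3k}$ by counting \emph{intersecting} tuples of edges (non-intersecting tuples contribute zero by the independence fact), and applies Markov to $Z^{2k}$; this directly yields the stated polynomial form $k^{O(k)}/|V|^k$. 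You instead decompose $\binom{V}{2}$ into at most $|V|$ matchings via a $1$-factorization, apply pigeonhole to localize $\Lambda$ to a single matching, and run an honest Chernoff bound on the independent indicators inside that matching; this yields the much stronger bound $\Pr[\Lambda]=e^{-\Omega(|V|)}$, and the stated inequality then follows for all sufficiently large $n$ by the elementary optimization $\max_{k\ge 1}\bigl(k\log|V|-Ak\log k\bigr)=O(|V|^{1/A})=o(|V|)$. Your exponential bound would in fact let the final accounting in Case 2 dispense with the choice $k=200$ entirely, at the (mild) cost that the lemma as you prove it holds only for $n$ beyond a threshold rather than verbatim for every $n$ and $k$; the paper's moment bound is a literal inequality for each $k$. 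Both are sound; yours is stronger where it matters and arguably more transparent, while the paper's matches the stated form exactly and mirrors the tuple-counting technique already used in its Appendix.
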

\begin{proof}
Let $Z_e$ be the indicator variable for the event $\Lambda_e$.
For each $e$, we have $\E[Z_e] = \gamma \leq 0.1$.

Note that if $e_1, \ldots, e_k$ are pairwise disjoint, then $Z_{e_1}, \ldots, Z_{e_k}$ are mutually independent.

Let $Z = \sum_{e \in {V\choose 2}} (Z_e - \gamma)$.
Note that $\E[Z] = 0$. We will show that $\E[Z^{2k}] \leq k^{O(k)} \cdot |V|^{3k}$.
This implies that
$$ \Pr[ \Lambda ]  \leq \Pr[Z > |V|^2/8] \leq \Pr [ Z^{2k} > (|V|^2 / 8 )^{2k} ] \leq \frac{\E[Z^{2k}]}{(|V|^2/8)^{2k}} \leq k^{O(k)} \frac{1}{|V|^k},$$
as desired.

It remains to show the claimed bound on $\E[Z^{2k}]$.
We have:
\begin{align*}
\E[Z^{2k}] &= \sum_{e_1, \ldots, e_{2k} \in {V \choose 2} }\E[ \prod_{j=1}^{2k} (Z_{e_j} - \gamma) ].
\end{align*}
We call a tuple $(e_1, \ldots, e_{2k}) \in {V \choose 2} ^{2k}$ {\em intersecting}
if for every $i \in [2k]$, there exists $j \neq i$ with $e_{j} \cap e_{i} \neq \emptyset$.
The key observation is the following: if $(e_1, \ldots, e_{2k})$ is not intersecting,
then $\E[ \prod_{j=1}^{2k} (Z_{e_j} - \gamma) ] = 0$.
To see this, suppose $(e_1, \ldots, e_{2k})$ is not intersecting because $e_i$
does not intersect any other $e_j$.
Then we have:
$$ \E[ \prod_{j=1}^{2k} (Z_{e_j} - \gamma) ] =  \E[Z_{e_i} - \gamma] \cdot \E[ \prod_{j\neq i} (Z_{e_j} - \gamma) ] = 0,$$
where the first equality follows from the independence property of the $Z_e$ mentioned above.

Thus, $\E[ Z^{2k} ] \leq \sum_{(e_1, \ldots, e_{2k}) \mbox{ intersecting} } 1 $.
We conclude the proof by counting the number of intersecting tuples $(e_1, \ldots, e_{2k})$.
Note that for every intersecting tuple $(e_1, \ldots, e_{2k})$, we have $\left|\bigcup_{j= 1}^{2k} e_j \right| \leq 3k$.
The number of intersecting tuples where every edge intersects exactly one other edge 
is $k^{\Theta(k)} n^{3k}$. Notice that every intersecting tuple that is not of this form has $\left|\bigcup_{j= 1}^{2k} e_j \right| \leq 3k-1$.
Thus the number of such intersecting tuples is at most ${ (3k)^2 \choose k } \cdot n^{3k-1} = k^{O(k)} \cdot n^{3k-1}$.
Thus $\E[Z^{2k}]$ is at most $k^{O(k)} \cdot n^{3k}$, as desired.
\end{proof}

Now suppose $\Lambda$ does not occur.
Then we can bound $C$ as follows:
\begin{align*}
C &= \left|\E_B \left[ \prod_{e \in {V \choose 2} }  e^{it (h_{\xx})_e B_e / \sigma}  \right] \right|\\
&= \prod_{e \in {V \choose 2} } \left| \E_{B_e} \left[ e^{it (h_{\xx})_e B_e / \sigma}  \right] \right|\\
&\leq \prod_{e \in {V \choose 2} } \left| \left(1 - 8p(1-p) \cdot \left\|\frac{t (h_{\xx})_e}{2\pi \sigma}\right\|^2  \right) \right| \quad \mbox{(by Lemma~\ref{lem:coslem})}\\
&\leq \prod_{e \in {V \choose 2} \mid \neg \Lambda_e } \left| \left(1 - 8p(1-p) \cdot\left\|\frac{t (h_{\xx})_e}{2\pi \sigma} \right\|^2 \right) \right|\\
&= \prod_{e \in {V \choose 2} \mid \neg \Lambda_e } \left| \left(1 - 8p(1-p) \cdot\left(\frac{t (h_{\xx})_e}{2\pi \sigma} \right)^2 \right) \right| \quad \mbox{(since $t < n^{1.001}, |(h_{\xx})_e| < |U|^{0.51}$, $|U| < n$ and $\sigma = \Omega(n^2)$)}\\
&\leq \prod_{e \in {V \choose 2} \mid \neg \Lambda_e } \left| \left(1 - 8p(1-p)\cdot  \left(\frac{t |U|^{0.49}}{2\pi \sigma} \right)^2 \right) \right| \quad \mbox{(since $|(h_{\xx})_e| \geq |U|^{0.49}$)}\\
&\leq e^{ - \frac{|V|^2}{8} \cdot 8p(1-p) \cdot  \left(\frac{t |U|^{0.49}}{2\pi \sigma} \right)^2}. \quad \mbox{(since $\Lambda$ did not occur)}
\end{align*}

Now we use the fact that $|U| = |V| = n/2$, that $\sigma = \Theta(n^2)$ and that $n^{0.55} < t$.

Thus $C \leq e^{- \Theta(n^{0.08})}$.

Thus, taking into account the probability of the bad event $\Lambda$, we get:
$$|\E[e^{itS/\sigma}]|^2 \leq  O \left( \frac{k^{O(k)}}{n^{k}} \right) + e^{-\Theta(n^{0.08})} \ll \frac{1}{t^{100}},$$
(choosing $k = 200$), as desired.

\section{Appendix}
  In this section we compute the moments of the random variable $Z_n \defeq S_n - p^3 \binom{n}{3}$. 
  
  Let $\Delta$ denote the set of $\binom{n}{3}$ triangles in $K_n$. For each $t \in \Delta$ denote $X_t$ to be the indicator of the event that all edges in $t$ appear. We write $t \sim t'$ if triangles $t$ and $t'$ share an edge. Note that if triangles $t$ and $t'$ do not share any edges, the random variables $X_t$ and $X_{t'}$ are independent and 
 \[ \E[(X_t - p^3)(X_{t'} - p^3)] = 0.\]

\begin{lemma}
Let $k$ be a positive integer. Let $C = C(p)$ be the constant $C(p) \defeq \E[(X_t - p^3)(X_{t'} - p^3)]$ where $t$ and $t'$ are any two triangles that share exactly one edge. Then if $k$ is odd
\[\E[Z_n^k] = O(n^{2k-1})\]
and if $k$ is even
\[\E[Z_n^k] = \frac{\ffact{n}{2k}C^{k/2}(k-1)!!}{2^{k/2}} + O(n^{2k-1}).\]
\end{lemma}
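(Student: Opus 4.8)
The plan is to expand $\E[Z_n^k] = \E\left[\left(\sum_{t \in \Delta}(X_t - p^3)\right)^k\right] = \sum_{(t_1,\dots,t_k) \in \Delta^k} \E\left[\prod_{j=1}^k (X_{t_j} - p^3)\right]$ and classify the tuples $(t_1,\dots,t_k)$ according to the ``intersection pattern'' of the triangles. The key vanishing observation, already noted before the lemma, is that if some triangle $t_j$ is edge-disjoint from all the others in the tuple, then $X_{t_j}$ is independent of the rest and $\E[X_{t_j} - p^3] = 0$, so the whole term vanishes. Hence only tuples in which every triangle shares an edge with at least one other triangle in the tuple contribute; call these \emph{surviving} tuples. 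For a surviving tuple, the triangles $\{t_1,\dots,t_k\}$ (as a set) must be coverable by ``blocks'' each of which is a connected component under $\sim$, and each block has size $\geq 2$, so the total number of distinct triangles is at most $\lfloor k/2 \rfloor$, and consequently the number of distinct vertices involved is at most $2k$ (two triangles sharing an edge span $4$ vertices; each additional edge-sharing triangle in a block adds at most $1$ new vertex, but a careful count shows a block of $b$ triangles spans at most $b+2$ vertices, summing to at most $k/2 + 2 \cdot(\text{number of blocks}) \le 2k$ — I'd nail down the exact bound $\le 2k$).

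First I would handle the \textbf{odd case}. If $k$ is odd, no surviving tuple can have all blocks of size exactly $2$ (that would force $k$ even), so at least one block has size $\geq 3$; such a block of $b \geq 3$ triangles spans at most $b+1 \le \frac{3b-1}{2}$ vertices... more simply, the total vertex count across a surviving tuple with an odd total is at most $2k-1$. The number of such tuples is therefore $O(n^{2k-1})$, and since each summand is bounded by a constant (at most $(1+p^3)^k$ in absolute value, depending only on $p$), we get $\E[Z_n^k] = O(n^{2k-1})$.

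For the \textbf{even case}, the dominant contribution comes from tuples where the distinct triangles pair up into $k/2$ disjoint pairs, each pair sharing exactly one edge, and distinct pairs being mutually edge-disjoint (indeed vertex-disjoint, up to lower-order corrections). For such a ``perfect matching'' configuration, the expectation factorizes as $\prod_{\text{pairs}} \E[(X_t - p^3)(X_{t'} - p^3)] = C^{k/2}$ by independence across pairs, where $C = C(p)$ is the shared-one-edge covariance. I would then count these configurations: choose which positions $1,\dots,k$ form which pair — there are $(k-1)!!$ perfect matchings of $[k]$ — and for each pair choose an ordered pair of one-edge-sharing triangles on a fresh set of vertices. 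A pair of triangles sharing exactly one edge is determined by choosing $4$ vertices and then the configuration; doing this for $k/2$ pairs on disjoint vertex sets gives $\ffact{n}{2k}$ for the ordered vertex choices times a combinatorial constant, and I'd check that this constant is exactly absorbed so that the leading term reads $\frac{\ffact{n}{2k} C^{k/2} (k-1)!!}{2^{k/2}}$ (the $2^{k/2}$ accounting for the two ways to order the two triangles within each pair, or a symmetry of the edge choice — I'd verify which). Every surviving even tuple that is \emph{not} of this exact shape — e.g. a block of size $\ge 3$, or two pairs sharing a vertex, or a pair sharing two edges (impossible for distinct triangles) — spans at most $2k-1$ vertices, contributing only $O(n^{2k-1})$.

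The \textbf{main obstacle} is the bookkeeping in the even case: making precise the claim that ``perfect-matching-type'' tuples with $2k$ distinct vertices contribute exactly $\frac{\ffact{n}{2k} C^{k/2}(k-1)!!}{2^{k/2}}$ and \emph{everything else} is $O(n^{2k-1})$. This requires (i) a clean combinatorial lemma that a surviving tuple on $k$ triangles uses $\le 2k$ vertices with equality characterizing exactly the perfect-matching shape, (ii) correctly counting the local structure of a one-edge-sharing triangle pair and reconciling the constant with the stated $2^{-k/2}$, and (iii) controlling the number of repeated-triangle degeneracies (a tuple can repeat the same triangle several times) — these have even fewer free vertices and fold into the error term, but need to be explicitly dismissed. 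Once the $\le 2k$ vertex bound and its equality case are established, the rest is routine counting.
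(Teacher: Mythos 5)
Your proposal follows essentially the same route as the paper: expand the $k$-th moment over tuples of triangles, discard tuples containing a triangle edge-disjoint from all others, show the dominant even-$k$ contribution comes from tuples whose skeleton is a disjoint union of $k/2$ one-edge-sharing triangle pairs (contributing $C^{k/2}$ each), and absorb all other intersecting tuples into $O(n^{2k-1})$ via the vertex-count bound. The one constant you flag as needing verification, the $2^{-k/2}$, falls out exactly as in the paper's count: $\frac{\ffact{n}{2k}}{24^{k/2}(k/2)!}$ choices of components, times $6^{k/2}$ shared-edge choices, times $k!$ orderings, equals $\frac{\ffact{n}{2k}(k-1)!!}{2^{k/2}}$.
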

\begin{proof}
 We start with
 \[E[Z_n^k] = \sum\limits_{t_1 \in \Delta} \cdots \sum\limits_{t_k \in \Delta} \E\left[\prod\limits_{i = 1}^{k} (X_{t_i} - p^3)\right].\]
 We say an ordered tuple $(t_1,\cdots, t_k)$ of triangles is {\em intersecting} if for every $i$ there is a $j \neq i$ for which $t_i \sim t_j$. Note that if $(t_1,\cdots, t_k)$ is not intersecting then there is an $i$ for which the random variable $X_{t_i}$ is independent with $X_{t_j}$ for all $j \neq i$. Furthermore, for such a tuple
 \[\E\left[ \prod\limits_{i = i}^k (X_{t_i} - p^3) \right] = 0.\]
 
 We now split into cases based on the parity of $k$.
 \newline
 
 {\bf Case $k$ is even:}
 
 Given an intersecting tuple we define its {\em skeleton} to be the subgraph of $K_n$ obtained by taking the union of the triangles $t_i$. Let $H$ be a graph on $2k$ vertices that consists of $k/2$ connected components, each component being the union of two triangles sharing a single edge (although there are many such graphs $H$, note they are all isomorphic).  We say a tuple $(t_1, \cdots, t_k)$ is {\em fully paired} if its skeleton is isomorphic to $H$. We first count the number of fully paired tuples by counting the number of copies of $H$ that appear in $K_n$ times the number of fully paired tuples whose skeleton is $H$. 
 
 To count the copies of $H$, first note that 
 \[\binom{n}{4}\binom{n-4}{4}\cdots \binom{n - 2k + 4}{4}\cdot \frac{1}{(k/2)!} = \frac{\ffact{n}{2k}}{24^{k/2}(k/2)!}\]
 counts the number of ways to choose the $k/2$ connected components. Within each component there are 6 choices of the shared edge of the two triangles, after which the two triangles are determined. Thus there are
 \[ \frac{\ffact{n}{2k}6^{k/2}}{24^{k/2}(k/2)!} = \frac{\ffact{n}{2k}}{2^k(k/2)!}\]
 copies. For each copy there are $k!$ tuples whose skeleton is that copy. Thus the number of fully paired tuples is
 \[ \frac{\ffact{n}{2k}k!}{2^k(k/2)!} = \frac{\ffact{n}{2k}(k-1)!!}{2^{k/2}}.\]

 For a fully paired tuples, the expression $\E\left[\prod\limits_{i = 1}^k (X_{t_i} - p^3)\right]$ splits as a product of the expectation of each connected component (which are pairwise independent). Thus,

 \begin{equation}
 \E\left[\prod\limits_{i = 1}^k (X_{t_i} - p^3)\right] = C^{k/2}.
 \end{equation}

 We now quickly argue that the number of intersecting tuples that are not fully paired is $O(n^{2k-1})$. This follows because if a tuple is intersecting but not fully paired, than its skeleton consists of at most $2k-1$ vertices. There are $O(1)$ graphs on a given set of vertices, and given such a graph, there are $O(1)$ tuples whose skeleton is isomorphic to it ($k$ is a constant). Thus there are
 \begin{equation}\label{eq:intersecting}
 \sum\limits_{i = 3}^{2k-1} O(1)\binom{n}{i} = O(n^{2k-1})
 \end{equation}
 such intersecting graphs.
 
 We then have the following calculation. Let $P$ denote the set of fully paired tuples and $Q$ denote the set of tuples that are intersecting but not fully paired.
 \begin{align*}
 \E(Z_n^k) &  = \sum\limits_{(t_1,\cdots,t_k)} \E\left[\prod\limits_{i = 1}^k (X_{t_i} - p^3)\right] \\
 & = \sum\limits_{(t_1,\cdots,t_k) \in P} \E\left[\prod\limits_{i = 1}^k (X_{t_i} - p^3)\right] + \sum\limits_{(t_1,\cdots,t_k) \in Q} \E\left[\prod\limits_{i = 1}^k (X_{t_i} - p^3)\right] \\
 & = \frac{\ffact{n}{2k}C^{k/2}(k-1)!!}{2^{k/2}} + O(n^{2k-1}).
 \end{align*}
 
 {\bf Case $k$ is odd:}
 
Let $Q$ denote the set of intersecting tuples. Note that if $k$ is odd then there are no fully paired tuples of $k$ triangles. Therefore $|Q| = O(n^{2k-1})$ and we have the following:
\begin{align*}
 \E(Z_n^k) &  = \sum\limits_{(t_1,\cdots,t_k)} \E\left[\prod\limits_{i = 1}^k (X_{t_i} - p^3)\right] \\
 & =  \sum\limits_{(t_1,\cdots,t_k) \in Q} \E\left[\prod\limits_{i = 1}^k (X_{t_i} - p^3)\right] \\
 & = O(n^{2k-1}).
 \end{align*}
 
\end{proof}

\begin{corollary}
 Let $\sigma_n^2 \defeq \textbf{Var}[S_n]$ and let $R_n \defeq (S_n-p^3\binom{n}{3})/\sigma_n$. Then $\E[R_n^k] \rightarrow \mu_k$ for all $k$ fixed, where
\[ \mu_k = \left\{ 
  \begin{array}{l l}
    (k-1)!! & \quad \text{if $k$ is even}\\
    0 & \quad \text{if $k$ is odd}
  \end{array} \right. .\]
\end{corollary}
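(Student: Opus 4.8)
The plan is to combine the moment computation of the Appendix Lemma with an asymptotic estimate for $\sigma_n$. Writing $Z_n \defeq S_n - p^3\binom{n}{3}$ as in the Lemma, we have $R_n = Z_n/\sigma_n$, so $\E[R_n^k] = \E[Z_n^k]/\sigma_n^k$ for every fixed $k$. The Lemma already gives $\E[Z_n^k]$ up to lower-order terms, so the only missing ingredient is the growth rate of $\sigma_n$.

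First I would compute $\sigma_n^2$. Since $\E[Z_n] = 0$, we have $\sigma_n^2 = \Var[S_n] = \E[Z_n^2]$, and the Lemma with $k = 2$ gives $\sigma_n^2 = \frac{\ffact{n}{4}\,C}{2} + O(n^3)$, where $C = C(p) = \E[(X_t - p^3)(X_{t'} - p^3)]$ for two triangles $t,t'$ sharing exactly one edge. A one-line expansion (the two triangles together span five edges, so $\E[X_tX_{t'}] = p^5$) yields $C(p) = p^5 - 2p^6 + p^6 = p^5(1-p)$, which is strictly positive for $p \in (0,1)$. This positivity is the one point requiring a moment's care, since it is what makes the normalization nondegenerate. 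Using $\ffact{n}{4} = n^4(1 + O(1/n))$ we obtain $\sigma_n^2 = \frac{C}{2}n^4(1 + O(1/n))$, and hence $\sigma_n^k = (C/2)^{k/2}n^{2k}(1 + O(1/n))$.

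It then remains to split on the parity of $k$, as in the Lemma. If $k$ is odd, the Lemma gives $\E[Z_n^k] = O(n^{2k-1})$, so $\E[R_n^k] = O(n^{2k-1})/\big((C/2)^{k/2}n^{2k}(1+O(1/n))\big) = O(1/n) \to 0 = \mu_k$. If $k$ is even, the Lemma gives $\E[Z_n^k] = \frac{\ffact{n}{2k}\,C^{k/2}(k-1)!!}{2^{k/2}} + O(n^{2k-1})$; dividing by $\sigma_n^k = \frac{C^{k/2}}{2^{k/2}}n^{2k}(1+O(1/n))$, the factors $C^{k/2}$ and $2^{-k/2}$ cancel, the $O(n^{2k-1})$ term contributes $O(1/n)$, and $\ffact{n}{2k}/n^{2k} = \prod_{j=0}^{2k-1}(1 - j/n) \to 1$, leaving $\E[R_n^k] \to (k-1)!! = \mu_k$. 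There is no real obstacle here beyond the verification that $C(p) > 0$; given the Appendix Lemma, the corollary is a routine normalization calculation.
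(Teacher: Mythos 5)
Your proposal is correct and is exactly the argument the paper intends (the paper states the corollary without writing out the proof, but it follows from the Appendix lemma in just the way you describe: take $k=2$ to identify $\sigma_n^2 = \tfrac{C}{2}n^4(1+O(1/n))$ with $C = p^5(1-p) > 0$, then divide and pass to the limit). Your explicit verification that $C(p)>0$ is a worthwhile detail the paper leaves implicit.
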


\section*{Acknowledgements} Thanks to Brian Garnett for helpful discussions.

\pagebreak

\bibliographystyle{alpha}
\bibliography{trianglebib}
\end{document}